\newtheorem{theo}{Theorem}                     
\newtheorem{propo}{Proposition}                  
\newtheorem{coro}{Corollary} 
\newtheorem{lemm}{Lemma}
\theoremstyle{remark}                  
\newtheorem{rema}{\bf Remark}
\newtheorem{exem}{\bf Example}
\begin{document}

\title{Smooth quotients of generalized Fermat curves}

\author{Rub\'en A. Hidalgo}
\address{Departamento de Matem\'atica y Estad\'{\i}stica\\
Universidad de La Frontera, Temuco, Chile}
\email{ruben.hidalgo@ufrontera.cl}

\thanks{Partially supported by projects Fondecyt 1190001 and 1220261}
\keywords{Riemann surfaces, Algebraic curves, Automorphisms, Moduli spaces}
\subjclass[2010]{30F10, 14H37, 30F40, 30F60}

\begin{abstract} 
A closed Riemann surface  $S$ is called a generalized Fermat curve of type $(p,n)$, where $n,p \geq 2$ are integers such that $(p-1)(n-1)>2$, if it admits a group $H \cong {\mathbb Z}_{p}^{n}$ of conformal automorphisms with quotient orbifold $S/H$ of genus zero with exactly $n+1$ cone points, each one of order $p$; in this case $H$ is called a generalized Fermat group of type $(p,n)$. In this case, it is known that $S$ is non-hyperelliptic and that $H$ is its unique generalized Fermat group of type $(p,n)$. Also, explicit equations for them, as a fiber product of classical Fermat curves of degree $p$, are known. For  $p$ a prime integer, we describe those subgroups $K$ of $H$ acting freely on $S$, together with algebraic equations for $S/K$, and determine those $K$ such that $S/K$ is hyperelliptic. 
\end{abstract}

\maketitle

%%%%%%%%%%%%%
\section{Introduction}
If $S$ is a closed Riemann surface of genus $g \geq 2$, then the  group ${\rm Aut}(S)$, of its conformal automorphisms, is finite
\cite{Schwarz} and $|{\rm Aut}(S)| \leq 84(g-1)$ \cite{Hurwitz}. Moreover, every finite group can be realized as a group of conformal automorphisms of some closed Riemann surface of genus at least two \cite{Greenberg}. 

A group $H<{\rm Aut}(S)$ is called a generalized Fermat group of type $(p,n)$, where $n, p \geq 2$ are integers such that $(p-1)(n-1)>2$, if 
$H \cong {\mathbb Z}_{p}^{n}$ and the quotient orbifold ${\mathcal O}=S/H$ has genus zero and exactly $n+1$ cone points, each one of order $p$.
In this case, we say that $S$ (respectively, $(S,H)$) is a generalized Fermat curve (respectively, a generalized Fermat pair) of type $(p,n)$.
Generalized Fermat curves of type $(p,n)$ are the highest abelian (branched) covers of Riemann orbifolds ${\mathcal O}$ of genus zero with exactly $n+1$ cone points, each one of order $p$. 

Let  $(S,H)$ be a generalized Fermat pair of type $(p,n)$. By the Riemann-Hurwitz formula, the genus of $S$ is $g_{p,n}=1+p^{n-1}((n-1)(p-1)-2)/2 >1$.  In \cite{Hidalgo}, it was observed that $S$ is non-hyperelliptic and  that it is uniformized by the derived (commutator) subgroup of a Fuchsian group uniformizing the orbifold ${\mathcal O}=S/H$ and, 
in \cite{HKLP}, that $H$ is the unique generalized Fermat group of type $(p,n)$ of $S$. The uniqueness of $H$ permits us to think of generalized Fermat curves as a non-hyperelliptic version of hyperelliptic ones, where the generalized Fermat group $H$ replaces the role of the hyperelliptic involution. 
In \cite[Corollary 2]{GHL} (see also Sections \ref{Sec:algebra} and \ref{Fuchsiansetting}) it was observed that there is a subset $\{a_{1},\ldots,a_{n+1}\} \subset H$ such that: (i) each $a_{j}$ has order $p$, (ii) $a_{1}a_{2}\cdots a_{n+1}=1$, $H=\langle a_{1},\ldots,a_{n}\rangle$ and (iii) every element of $H$ acting with fixed points is a power of some $a_{j}$. We call $\{a_{1},\ldots,a_{n+1}\}$ a standard set of generators of $H$. 

As a consequence of the Riemann-Roch theorem, the surface $S$ can be represented by irreducible smooth complex algebraic curves. Simple algebraic equations for $S$ are given in  \cite{GHL} (see Section \ref{Sec:uniformiza}); these are provided by a suitable fiber product of classical Fermat curves of degree $p$. For instance, a generalized Fermat curve of type $(p,2)$ can be described by the classical Fermat curve of degree $p$.

Let us consider a Galois branched covering $S \to \widehat{\mathbb C}$ with deck group $H$ and with $n+1$ branch values. Up to postcomposition with a suitable M\"obius transformation, we may assume these branch values to be $p_{1}=\infty, p_{2}=0, p_{3}=1, p_{4}=\lambda_{1},\ldots, p_{n+1}=\lambda_{n-2}$, where $p_{j}$ is the projection of the fixed points of $a_{j}$. 
If $K$ is a subgroup of $H$ acting freely on $S$, then $R=S/K$ is a closed Riemann surface admitting the abelian group $G=H/K$ as a group of conformal automorphisms such that the quotient orbifold $R/G$ has genus zero and exactly $n+1$ cone points, each one of order $p$ (conversely, see Remark \ref{observa}, each such pair $(R,G)$ is obtained in that way). 

In Section \ref{Sec:libre}, for $p$ a prime integer, we describe the non-trivial subgroups $K$ of $H$ acting freely on $S$ (Proposition \ref{lema1}) together a corresponding algebraic curve for the underlying Riemann surface structure of $R=S/K$. In Section \ref{ejemplitos}, we provide explicit examples for $p=2$. In Section \ref{Sec:Prueba}, we obtain the following result, which describes those subgroups $K$ for which $R$ is hyperelliptic and also corresponding algebraic curves for them.  

\begin{theo}\label{explicito1}
Let $(S,H)$ be a generalized Fermat pair of type $(p,n)$, where $(p-1)(n-1)>2$ and $p$ is a prime integer. Let $\{a_{1},\ldots,a_{n+1}\}$ be a standard set of generators of $H$ and 
let $\pi:S \to \widehat{\mathbb C}$ be a Galois branched covering, with deck group $H$, whose branch values are 
$p_{1}=\infty, p_{2}=0, p_{3}=1, p_{4}=\lambda_{1},\ldots, p_{n+1}=\lambda_{n-2}$ and such that $p_{j}$ is the projection of the fixed points of $a_{j}$.

If $K$ is a subgroup of $H$ acting freely on $S$ such that $R=S/K$ is hyperelliptic, then one of the following holds.

\begin{enumerate}[label=(\alph*),leftmargin=*,align=left]

\item[{\bf (1)}] $p=2$, $n \geq 5$ is odd, $K=\langle a_{1}a_{2},a_{1}a_{3},\ldots,a_{1}a_{n}\rangle \cong {\mathbb Z}_{2}^{n-1}$ and  
$$R: \; y^{2}=x(x-1)(x-\lambda_{1})\cdots(x-\lambda_{n-2}).$$

\item[{\bf (2)}] $p=2$, $n \geq 4$ is even, $K=\langle a_{i_{1}} a_{i_{2}}, \ldots, a_{i_{1}} a_{i_{n-1}}\rangle \cong {\mathbb Z}_{2}^{n-2}$,
where $\{i_{1},\ldots, i_{n+1}\}=\{1,\ldots,n+1\}$, and
$$R: \; y^{2}=\prod_{j=1}^{2(n-1)}(x-\mu_{j}), \quad 
\{\mu_{1},...,\mu_{2(n-1)}\}=Q^{-1}(\{p_{i_{1}},\ldots,p_{i_{n-1}}\}),$$ 
$$
Q(z)=\left\{ \begin{array}{ll}
(p_{i_{n}}-p_{i_{n+1}}z^{2})/(1-z^{2}), & \mbox{ if $\infty \notin \{p_{i_{n}},p_{i_{n+1}}\}$}\\
z^{2}+p_{i_{n+1}}, & \mbox{if $p_{i_{n}}=\infty$}\\
p_{i_{n}}-z^{2}, & \mbox{if $p_{i_{n+1}}=\infty$}
\end{array}
\right\}.
$$

\item[{\bf (3)}] $p=2$, $n=5$,  $\lambda_{2}\lambda_{3}=\lambda_{1}$,  $K=\langle a_{i_{1}}a_{i_{2}}, a_{i_{3}}a_{i_{4}}, a_{i_{1}}a_{i_{3}}a_{i_{5}} \rangle \cong {\mathbb Z}_{2}^{3}$,
where $\{i_{1},i_{2},i_{3},i_{4},i_{5}\} \subset \{1,2,3,4,5,6\}$ and 
$$R:\; y^{2}=(x^{2}-a^{2})(x^{2}-a^{-2})(x^{2}-b^{2})(x^{2}-b^{-2}),$$
where $a^{2},b^{2} \in {\mathbb C} \setminus \{0,1,-1\}$ are such that
$$a^{2}+a^{-2}=M^{-1}\left(2\sqrt{\lambda_{1}}\right), \; b^{2}+b^{-2}=M^{-1}\left(-2\sqrt{\lambda_{1}}\right),$$
for $M(x)=\frac{1}{4}(1+\lambda_{1}-\lambda_{2}-\lambda_{3}) x +1+\lambda_{1}+\lambda_{2}+\lambda_{3}$.

\medskip
\item[{\bf (4)}] $p=2$, $n \geq 4$, $K=\langle a_{i_{1}}a_{i_{2}},a_{i_{1}}a_{i_{3}},\ldots,a_{i_{1}}a_{i_{n-2}}\rangle \cong {\mathbb Z}_{2}^{n-3}$, where $\{i_{1},\ldots, i_{n+1}\}=\{1,\ldots,n+1\}$,   and
$$R: \quad y^{2}=\prod_{j=1}^{n-2} (x^{4}+2(1-2L(p_{i_{j}}))x^{2}+1),$$
with $L$ the unique M\"obius transformation such that $L(\infty)=p_{i_{n-1}}$, $L(0)=p_{i_{n}}$ and $L(1)=p_{i_{n+1}}$.

\item[{\bf (5)}] $p \geq 3$ and $R$ is presented by either one of the following hyperelliptic curves:
\begin{enumerate} 
\item[(i)]  $y^{2}=x^{p}-1$ (of genus $(p-1)/2$), or
\item[(ii)]  $y^{2}=(x^{p}-1)(x^{p}-\alpha^{p})$ (of genus $p-1$), where $\sqrt{\alpha^{p}}=(\sqrt{\lambda_{1}}+1)/(\sqrt{\lambda_{1}}-1)$.
\end{enumerate}

\medskip

In both cases, (i) and (ii), the group $H/K \cong {\mathbb Z}_{p}$ is generated by $A(x,y)=(e^{2 \pi i/p}x,y)$. In case (i), $n=2$ and $K=\langle a_{j}a_{i}^{-1}\rangle$ ($i \neq j$). In case (ii), $n=3$ and $K=\langle a_{j}a_{i}^{-1}, a_{k}a_{i}^{-1}\rangle$ ($i,j,k$ pairwise distinct).
\end{enumerate}
\end{theo}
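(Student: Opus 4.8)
The plan is to use that a hyperelliptic Riemann surface $R$ of genus at least two carries a unique, hence central, hyperelliptic involution $\iota\in{\rm Aut}(R)$, and to analyze the abelian group it generates together with $G=H/K$. First I would recast the data of $K$ group-theoretically: since $K$ acts freely it contains no nontrivial power of any $a_{j}$, so the quotient map $\phi\colon H\to G=H/K$ satisfies $\phi(a_{j})\neq 0$ for every $j$ and $\sum_{j}\phi(a_{j})=0$ (the image of the relation $a_{1}\cdots a_{n+1}=1$); conversely any such surjection recovers $K=\ker\phi$ and presents $R=S/K$ as the abelian cover of $\widehat{\mathbb C}$ branched at $p_{1},\dots,p_{n+1}$ with local monodromies $\phi(a_{j})$. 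Classifying the admissible $K$ thus amounts to classifying such $\phi$ for which $R$ is hyperelliptic, which meshes with the description of free subgroups in Proposition \ref{lema1}.

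The central step bounds the index $[H:K]=|G|$. Because $\iota$ commutes with $G$, the group $\widetilde G=\langle G,\iota\rangle$ is abelian with $R/\widetilde G$ of genus zero, and $\widetilde G/\langle\iota\rangle$ acts faithfully on $R/\langle\iota\rangle\cong\widehat{\mathbb C}$, hence embeds into ${\rm PGL}_{2}(\mathbb C)$. A finite abelian subgroup of ${\rm PGL}_{2}(\mathbb C)$ is cyclic or isomorphic to $\mathbb{Z}_{2}^{2}$. For $p$ odd this forces the $p$-group $G$ to be cyclic, so $G\cong\mathbb{Z}_{p}$ and $[H:K]=p$; moreover every $G$-fixed point of $R$ lies over one of the two fixed points of $G$ on $R/\langle\iota\rangle$, with at most two points over each, so the number $n+1$ of branch points is at most four and $n\in\{2,3\}$, giving the two subfamilies of case (5). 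For $p=2$ the same input shows $\widetilde G$ has $2$-rank at most three, whence $|G|\in\{2,4,8\}$, matching the ranks of $K$ in cases (1)--(4).

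For each admissible order of $G$ I would then enumerate the maps $\phi$ and decide hyperellipticity. The relation $\sum_{j}\phi(a_{j})=0$ together with $\phi(a_{j})\neq 0$ is already very restrictive: when $G=\mathbb{Z}_{2}$ the only possibility is $\phi(a_{j})=1$ for all $j$, which is consistent only if $n+1$ is even, explaining why case (1) needs $n$ odd and why the deck involution is forced to be $\iota$ itself; the analogous parity count for $G=\mathbb{Z}_{2}^{2}$ produces the even/odd dichotomy separating cases (2) and (3). I expect case (3) to be the main obstacle: there $\iota\notin G$ and $\widetilde G\cong\mathbb{Z}_{2}^{3}$ is of maximal rank, so hyperellipticity is not forced by the cover and instead imposes one relation on the branch locus; following this relation through the three intermediate double covers is what isolates $n=5$ and the modular condition $\lambda_{2}\lambda_{3}=\lambda_{1}$, and verifying that no further $\lambda_{i}$-conditions and no other $n$ survive is the delicate part.

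Finally, to obtain the displayed equations I would factor $R\to R/G=\widehat{\mathbb C}$ through the tower $R\to R/\langle\iota\rangle\to R/\widetilde G$. The coordinate $x$ on $R/\langle\iota\rangle\cong\widehat{\mathbb C}$ is the hyperelliptic coordinate, and $R/\langle\iota\rangle\to R/\widetilde G$ is a composite of degree-two maps, each a M\"obius change of variable followed by $z\mapsto z^{2}$. Pulling the branch values $p_{i_{k}}$ back through this composite, after the normalizations effected by the M\"obius maps $Q$, $M$ and $L$ in the statement, yields the Weierstrass points $\mu_{j}$ (respectively the quartic factors $x^{4}+2(1-2L(p_{i_{j}}))x^{2}+1$), so that $R\colon y^{2}=\prod(x-\mu_{j})$ in the asserted form; the degree of the composite (two in cases (2)--(3), four in case (4)) accounts for the shape of the product, the two excluded branch values being those over which the intermediate cover ramifies. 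For $p$ odd one argues dually: placing the two fixed points of the generator $A$ of $G\cong\mathbb{Z}_{p}$ at $0$ and $\infty$ of $R/\langle\iota\rangle$, invariance of the Weierstrass set under $x\mapsto e^{2\pi i/p}x$ forces the hyperelliptic polynomial to be $x^{p}-1$ (one free orbit, $n=2$) or $(x^{p}-1)(x^{p}-\alpha^{p})$ (two free orbits, $n=3$), and matching branch values through $R/\widetilde G$ reads off $\alpha$ from $\lambda_{1}$. These last identifications are direct, if lengthy, coordinate computations.
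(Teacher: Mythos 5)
Your proposal follows essentially the same route as the paper's proof: the same reduction of $K$ to a surjection $H\to G$ killing no $a_{j}$ (Proposition \ref{lema1}), the same bound $|G|\in\{2,4,8\}$ for $p=2$ (resp.\ $G\cong{\mathbb Z}_{p}$ for $p$ odd) obtained from the fact that $\langle G,\iota\rangle/\langle\iota\rangle$ is a finite abelian subgroup of ${\rm PSL}_{2}({\mathbb C})$, hence cyclic or ${\mathbb Z}_{2}^{2}$ (this is exactly Lemma \ref{cocientes}), the same case split according to whether $\iota\in G$, and the same extraction of the equations by pulling the branch values back through the tower $R\to R/\langle\iota\rangle\to R/\langle G,\iota\rangle$. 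The only local divergence is that for $p\geq 3$ you get $n+1\leq 4$ by counting the fixed points of $G$ lying over the two fixed points of the induced rotation of $R/\langle\iota\rangle$, where the paper instead matches the two computations of the signature of $R/\langle G,\iota\rangle$; both yield $n\in\{2,3\}$, and the remaining steps you defer (isolating $n=5$ and $\lambda_{2}\lambda_{3}=\lambda_{1}$ in the $\iota\notin G$, $r=2$ case, and the explicit coordinate work) are carried out in the paper along precisely the lines you indicate.
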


 In Section \ref{Ejemplo}, we work out explicitly the type $(2,4)$ (i.e., for classical Humbert curves). 
In Section \ref{prueba}, for type $(2,n)$, with $n \geq 4$, we provide an interpretation of the previous result in terms of maps between different moduli spaces
associated to the quotient hyperelliptic surfaces and the moduli space of generalized Fermat curves of type $(2,n)$ (Theorem \ref{main}).

%%%%%%%%%%%%%%%%%%%%%%
%%%%%%%%%%%%%%%%%%%%%%
\section{preliminaries}\label{Sec:uniformiza}
In this section we recollect some of the main properties on generalized Fermat pairs we will need in the rest of this paper.

Let $(S,H)$ be a generalized Fermat pair of type $(p,n)$, where  $n,p \geq 2$ are integers such that $(p-1)(n-1) > 2$.  

Let $\pi:S \to \widehat{\mathbb C}$ be a Galois branched covering map, with $H$ as its deck group, and whose branch values are given by the collection 
$${\mathcal B}=\{p_{1}=\infty, p_{2}=0, p_{3}=1, p_{4}=\lambda_{1},\ldots, p_{n+1}=\lambda_{n-2}\} \subset \widehat{\mathbb C}$$ (that choice is unique up to the action of ${\rm PSL}_{2}({\mathbb C})$).

%%%%%%%%%%%%%
\subsection{Algebraic models}\label{Sec:algebra}

\subsubsection{Some algebraic curves}
If $n=2$, then set $C^{p}(\varnothing)=\{x_{1}^{p}+x_{2}^{p}+x_{3}^{p}=0\} \subset {\mathbb P}^{2}$ (the classical Fermat curve of degree $p$).
If $n \geq 3$, then as the collection of points $\infty,0,1,\lambda_{1},\ldots,\lambda_{n-2}$ are pairwise different, the one dimensional algebraic set
{\small
\begin{equation}\label{eq2}
C^{p}(\lambda_{1},\ldots,\lambda_{n-2})=\left\{ \begin{array}{ccc}
x_{1}^{p}+x_{2}^{p}+x_{3}^{p}&=&0\\
\lambda_{1}x_{1}^{p}+x_{2}^{p}+x_{4}^{p}&=&0\\
\vdots \\
\lambda_{n-2}x_{1}^{p}+x_{2}^{p}+x_{n+1}^{p}&=&0
\end{array}
\right\} \subset {\mathbb P}^{n},
\end{equation}
}
is irreducible and smooth, so it represents a closed Riemann surface.

\subsubsection{Some linear automorphisms}\label{Sec:standard}
Note that each of the linear projective maps
$$a_{j}([x_{1}: \cdots :x_{n+1}])=[x_{1}: \cdots : x_{j-1} : e^{2\pi i/p}x_{j} : x_{j+1} : \cdots : x_{n+1}], \; j=1,\ldots, n+1,$$
is an order $p$  conformal automorphism of $C^{p}(\lambda_{1},\ldots,\lambda_{n-2})$ (for $n \geq 2$). 
Let us denote by $H_{0} \cong {\mathbb Z}_{p}^{n}$ the linear group generated by these automorphisms.
The map
$$\pi_{0}:C^{p}(\lambda_{1},\ldots,\lambda_{n-2}) \to \widehat{\mathbb C}: [x_{1}: \cdots : x_{n+1}] \mapsto -(x_{2}/x_{1})^{p}$$
turns out to be a Galois branched covering map, with deck group $H_{0}$, and 
whose branch locus is ${\mathcal B}$. Moreover, the fixed points of $a_{j}$ are projected under $\pi_{0}$ to the point $p_{j}$. In particular $(C^{p}(\lambda_{1},\ldots,\lambda_{n-2}),H_{0})$ is a generalized Fermat pair of type $(p,n)$.

\begin{propo}[\cite{GHL}]
There is a biholomorphism $\psi:S \to C^{p}(\lambda_{1},\ldots,\lambda_{n-2})$ such that $\pi=\pi_{0} \circ \psi$ (so, $\psi H \psi^{-1}=H_{0}$).
\end{propo}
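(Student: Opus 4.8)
The plan is to show that $\pi$ and $\pi_{0}$ are one and the same covering of a common base orbifold, after which $\psi$ will be essentially forced. Fix a Fuchsian group $\Delta$ uniformizing the genus-zero orbifold $\mathcal{O}=\widehat{\mathbb C}$ with $n+1$ cone points of order $p$ located at the points of $\mathcal{B}$; thus $\Delta$ admits a presentation $\langle x_{1},\ldots,x_{n+1}\mid x_{j}^{p}=1,\ x_{1}\cdots x_{n+1}=1\rangle$, where $x_{j}$ is represented by a small loop about $p_{j}$. By the uniformization of generalized Fermat curves recalled above (\cite{Hidalgo}), both $(S,H)$ and $(C^{p}(\lambda_{1},\ldots,\lambda_{n-2}),H_{0})$ arise as coverings of this same $\mathcal{O}$.

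Each pair is encoded by its monodromy. The covering $\pi$ yields a surjection $\theta:\Delta\to H$ with $\theta(x_{j})=a_{j}$, the standard generator whose fixed points lie over $p_{j}$; similarly $\pi_{0}$ yields $\theta_{0}:\Delta\to H_{0}$ sending $x_{j}$ to the linear automorphism $a_{j}$, whose fixed points also lie over $p_{j}$. Now $H$ and $H_{0}$ are abelian, so both homomorphisms factor through the abelianization $\Delta^{\mathrm{ab}}=\Delta/[\Delta,\Delta]$. A direct reading of the presentation gives $\Delta^{\mathrm{ab}}\cong{\mathbb Z}_{p}^{n+1}/\langle(1,\ldots,1)\rangle\cong{\mathbb Z}_{p}^{n}$; in particular it is elementary abelian of order $p^{n}$. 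Since a surjection between two groups of the same finite order is an isomorphism, the induced maps $\Delta^{\mathrm{ab}}\to H$ and $\Delta^{\mathrm{ab}}\to H_{0}$ are both isomorphisms. Consequently the kernels coincide:
$$N:=\ker\theta=[\Delta,\Delta]=\ker\theta_{0}.$$

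Because the two coverings have a common kernel $N\triangleleft\Delta$, they literally agree: $S\cong{\mathbb H}/N\cong C^{p}(\lambda_{1},\ldots,\lambda_{n-2})$, and the identity of ${\mathbb H}$ descends to a biholomorphism $\psi:S\to C^{p}(\lambda_{1},\ldots,\lambda_{n-2})$. The two deck groups are both $\Delta/N$ acting identically, so conjugation by $\psi$ carries $H$ onto $H_{0}$, matching $a_{j}$ with $a_{j}$; that is, $\psi H\psi^{-1}=H_{0}$. Finally $\psi$ induces a self-biholomorphism of the common base $\widehat{\mathbb C}$ fixing each branch value $p_{1},\ldots,p_{n+1}$; as $n+1\geq 3$, this M\"obius transformation fixes at least three points and is therefore the identity, whence $\pi=\pi_{0}\circ\psi$.

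The point demanding care is not the computation but the \emph{marking}. One must fix the elliptic generators $x_{j}$ once and for all so that, for both coverings simultaneously, $\theta(x_{j})$ and $\theta_{0}(x_{j})$ are the generators whose fixed points project to the \emph{same} branch value $p_{j}$. This is precisely what the hypothesis that $p_{j}$ be the projection of $\mathrm{Fix}(a_{j})$, imposed on both $\pi$ and $\pi_{0}$, provides; without such compatibility one obtains only an abstract biholomorphism of the two surfaces, not one intertwining the coverings. It is also implicit that $N=[\Delta,\Delta]$ is torsion-free, so that ${\mathbb H}/N$ is a smooth surface rather than an orbifold; this is exactly the content of $(S,H)$ and $(C^{p}(\lambda_{1},\ldots,\lambda_{n-2}),H_{0})$ being generalized Fermat pairs, and need not be reproved here.
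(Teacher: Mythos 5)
Your argument is correct and is essentially the route the paper itself relies on: the proposition is quoted from \cite{GHL}, but the mechanism recorded in Section~\ref{Fuchsiansetting} --- both pairs are quotients of ${\mathbb H}^{2}$ by the torsion-free derived subgroup of the Fuchsian group $\Delta$ uniformizing the common base orbifold, because any surjection of $\Delta$ onto an abelian group of order $p^{n}=|\Delta^{\mathrm{ab}}|$ with torsion-free kernel must have kernel $[\Delta,\Delta]$ --- is exactly what you reconstruct. Your closing remarks on marking the elliptic generators consistently over each $p_{j}$ and on the torsion-freeness of $[\Delta,\Delta]$ address the only delicate points, so nothing is missing.
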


\subsubsection{Standard set of generators}
It can be seen, from the explicit forms of the automorphisms $a_{j}$, that $a_{1}\cdots a_{n+1}=1$ and that the only elements of $H_{0}$, besides the identity, acting with fixed points on the curve are the powers of $a_{1},\ldots, a_{n+1}$.
As a consequence of the above proposition, we may find a collection $\{b_{1},\ldots,b_{n+1}\} \subset H$ such that: (i) each $b_{j}$ has order $p$, (ii) $b_{1}b_{2}\cdots b_{n+1}=1$, $H=\langle b_{1},\ldots,b_{n}\rangle$ and (iii) every element of $H$ acting with fixed points is a power of some $b_{j}$ (moreover, the fixed points of $b_{j}$ are projected under $\pi$ to $p_{j}$). We call such a set a  standard set of generators of $H$. Any other standard set of generators is of the form 
$\{b_{1}^{l},\ldots,b_{n+1}^{l}\}$, where $l \in \{1,\ldots,p-1\}$ is relatively prime to $p$. (Note that not every minimal set of generators of $H$ will be necessarily part of a standard set of generators.)

\begin{rema}[Connection to Petri's theorem]
\mbox{}
\begin{enumerate}[label=(\alph*),leftmargin=*,align=left]
\item[(1)] As the generalized Fermat curve $S$  is non-hyperelliptic, non-trigonal and not isomorphic to a plane quintic, 
Petri's theorem \cite{Petri,Saint} (together Noether's theorem) asserts that a canonical curve of $S$ (obtained by its embedding into ${\mathbb P}^{g_{p,n}-1}$ using a basis of its holomorphic one-forms) is an (not necessarily complete)  intersection of $(g_{p,n}-3)(g_{p,n}-2)/2$ quadric hypersurfaces. The fiber product representation $C^{p}(\lambda_{1},\ldots,\lambda_{n-2}) \subset {\mathbb P}^{n}$ is an embedding into a lower dimensional projective space for $(p,n)\neq (2,4)$. Nevertheless, in \cite{Hidalgo:bases} it was noted that $C^{p}(\lambda_{1},\ldots,\lambda_{n-2})$ is a projection of a suitable canonical curve obtained by forgetting some coordinates. 

\item[(2)] For $p=2$, so $n \geq 4$, the fiber product algebraic representation \eqref{eq2} is a complete intersection of $(n-1)$ quadrics in ${\mathbb P}^{n}$. Such types of complete intersections of $(n-1)$ quadrics in ${\mathbb P}^{n}$ where studied by Edge  in \cite{Edge2}. 

\item[(3)] Generalized Fermat curves of type $(2,4)$, also called {\it classical Humbert curves}, define a $2$-dimensional locus in the moduli space ${\mathcal M}_{5}$ (of genus five Riemann surfaces). These were firstly considered by Humbert in \cite{Humbert} as degree seven curves in ${\mathbb P}^{3}$. Later, in  \cite{Baker}, Baker rediscovered these curves related to a Weddle surface (see also \cite{Accola, Edge, Varley}).
In \cite{Edge}, Edge observed that each classical Humbert curve $S$ can be described as a complete intersection $C_{S}$ of three diagonal quadric hypersurfaces in ${\mathbb P}^{4}$: 
{\small
$$C_{S}=\left\{[x_{1}:x_{2}:x_{3}:x_{4}:x_{5}] \in {\mathbb P}^{4}: x_{1}^{2}+x_{2}^{2}+x_{3}^{2}+x_{4}^{2}+x_{5}^{2}=0,\right.$$
$$\left. \alpha_{1}x_{1}^{2}+\alpha_{2}x_{2}^{2}+\alpha_{3}x_{3}^{2}+\alpha_{4}x_{4}^{2}+\alpha_{5}x_{5}^{2}=0, \; \alpha_{1}^{2}x_{1}^{2}+\alpha^{2}_{2}x_{2}^{2}+\alpha_{3}^{2}x_{3}^{2}+\alpha_{4}^{2}x_{4}^{2}+\alpha_{5}^{2}x_{5}^{2}=0\right\},$$
}
where $\alpha_{1},\ldots,\alpha_{5} \in {\mathbb C}$ are different (in this model, $H=H_{0}$). This is isomorphic to the fiber product {\small
$$C^{2}(\lambda_{1},\lambda_{2})=\left\{[x_{1}:x_{2}:x_{3}:x_{4}:x_{5}] \in {\mathbb P}^{4}: x_{1}^{2}+x_{2}^{2}+x_{3}^{2}=0, \; \lambda_{1} x_{1}^{2}+x_{2}^{2}+x_{4}^{2}=0, \; \lambda_{2}x_{1}^{2}+x_{2}^{2}+x_{5}^{2}=0\right\},$$
}
where $\lambda_{1}$ and $\lambda_{2}$ are given in terms of $\alpha_{1},\ldots,\alpha_{5}$.
\end{enumerate}
\end{rema}
 
%%%%%%%%%%%%%%%
\subsection{Fuchsian uniformizations}\label{Fuchsiansetting}
By the uniformization theorem, there is a co-compact Fuchsian group $\Gamma_{p,n}$, of conformal automorphisms of the hyperbolic plane  ${\mathbb H}^{2}$, with a presentation
$\Gamma_{p,n}=\langle x_{1},\ldots, x_{n+1}: x_{1}^{p}=\cdots=x_{n+1}^{p}=x_{1}x_{2}\cdots x_{n+1}=1\rangle$, such that the orbifold ${\mathcal O}=S/H$ is conformally equivalent (as orbifolds) to  ${\mathbb H}^{2}/\Gamma_{p,n}$.  Due to results of Maclachlan \cite{Maclachlan}, its derived subgroup  $\Gamma'_{p,n}$ is torsion free, so the quotient $X_{p,n}={\mathbb H}^{2}/\Gamma'_{p,n}$ is a closed Riemann surface  with $H_{p,n}=\Gamma_{p,n}/\Gamma'_{p,n} \cong {\mathbb Z}_{p}^{n} < {\rm Aut}(X_{p,n})$ such that $X_{p,n}/H_{p,n}={\mathbb H}^{2}/\Gamma_{p,n}=S/H$. In particular,  $(X_{p,n}, H_{p,n})$ is a generalized Fermat pair of type $(p,n)$. In this setting, the classes in $H_{p,n}$ of the generators $x_{1}, \ldots, x_{n+1}$ provide a  standard set of generators of $H_{p,n}$.

\begin{propo}[\cite{GHL}] 
There is a biholomorphism $\phi:S \to X_{p,n}$ conjugating $H$ onto $H_{p,n}$.  
\end{propo}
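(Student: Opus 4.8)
The statement is a rigidity/uniqueness assertion, and the natural route is through the Fuchsian uniformization just set up. The plan is to show that both $S$ and $X_{p,n}$ arise as the \emph{same} covering of the common base orbifold $\mathcal O=S/H=X_{p,n}/H_{p,n}$, namely the one associated with the derived subgroup $\Gamma'_{p,n}$ of $\Gamma_{p,n}$, and then to observe that an isomorphism of these covers is precisely a biholomorphism conjugating the deck groups.

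First I would record the orbifold covering dictionary. Fix $\Gamma_{p,n}$ uniformizing $\mathcal O$, with the given presentation. The regular (branched) covering $S\to\mathcal O$ with deck group $H\cong{\mathbb Z}_p^n$ is classified by a surjection $\rho:\Gamma_{p,n}\to H$ whose kernel $N$ is torsion free: this torsion-freeness is equivalent to $S$ being a smooth surface, i.e. to $\rho(x_j)$ having order exactly $p$ for each generator $x_j$, since a nontrivial elliptic element of $\Gamma_{p,n}$ lies in a conjugate of some $\langle x_j\rangle$. Then $S={\mathbb H}^2/N$ with $H=\Gamma_{p,n}/N$. As the target $H$ is abelian, $\rho$ factors through the abelianization, so $\Gamma'_{p,n}\le N$ and $\rho$ induces a surjection $\overline\rho:\Gamma_{p,n}^{\rm ab}\to H$.

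The crux is the computation of $\Gamma_{p,n}^{\rm ab}$. Abelianizing the presentation $\langle x_1,\dots,x_{n+1}\mid x_1^p=\cdots=x_{n+1}^p=x_1\cdots x_{n+1}=1\rangle$ yields ${\mathbb Z}_p^{\,n+1}$ modulo the single relation $\overline x_1+\cdots+\overline x_{n+1}=0$, an element of order $p$; hence $\Gamma_{p,n}^{\rm ab}\cong{\mathbb Z}_p^{\,n}$, and each $\overline x_j$ has order $p$ in it (so the torsion-freeness of $N$ is in fact automatic). Now $\overline\rho:{\mathbb Z}_p^{\,n}\to H\cong{\mathbb Z}_p^{\,n}$ is a surjection between finite groups of equal order, hence an isomorphism; therefore $N=\ker\rho=\ker(\Gamma_{p,n}\to\Gamma_{p,n}^{\rm ab})=\Gamma'_{p,n}$. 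In particular the smooth abelian ${\mathbb Z}_p^n$-cover of $\mathcal O$ is \emph{unique}: it is the derived-subgroup cover. By construction $X_{p,n}={\mathbb H}^2/\Gamma'_{p,n}$ is exactly this cover, with deck group $H_{p,n}=\Gamma_{p,n}/\Gamma'_{p,n}$.

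It then remains to convert the equality of covers into the asserted biholomorphism. Writing $S={\mathbb H}^2/N$ with $N=\Gamma'_{p,n}$, the identity of ${\mathbb H}^2$ descends to a biholomorphism $\phi:S={\mathbb H}^2/\Gamma'_{p,n}\to X_{p,n}={\mathbb H}^2/\Gamma'_{p,n}$ that intertwines the two deck actions, so $\phi H\phi^{-1}=H_{p,n}$; if instead one begins from an a priori different uniformizing group for $S/H$, one invokes that any two Fuchsian groups uniformizing $\mathcal O$ are conjugate in ${\rm PSL}_2({\mathbb R})$ and that the derived subgroup is characteristic, so the conjugating M\"obius map transports $\Gamma'$ to $\Gamma'_{p,n}$ and descends to the desired $\phi$. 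I expect the only delicate points to be bookkeeping rather than substance: justifying the orbifold/covering correspondence with torsion-free kernels, carrying out the abelianization (which does the real work), and using uniqueness of the uniformizing Fuchsian group up to conjugacy so that the characteristic subgroup $\Gamma'_{p,n}$ matches on both sides.
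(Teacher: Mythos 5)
Your proof is correct and follows the same route the paper itself points to: the paper states this proposition as a citation to [GHL] and, in the introduction and Section 2.2, explicitly describes the mechanism as $S$ being uniformized by the derived subgroup $\Gamma'_{p,n}$ of the Fuchsian group uniformizing $S/H$. Your abelianization computation $\Gamma_{p,n}^{\rm ab}\cong{\mathbb Z}_{p}^{n}$, forcing the classifying kernel to equal $\Gamma'_{p,n}$, is precisely the content of that observation, so no further comparison is needed.
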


\begin{rema}\label{observa}
The above Fuchsian description of generalized Fermat curves, in terms of the derived subgroup, permits to observe the following fact. If $R$ is a closed Riemann surface admitting an abelian group $G$ of conformal automorphisms such that the Riemann orbifold $R/G$ is given by the Riemann sphere, its cone points set is $\{\infty,0,1,\lambda_{1},\ldots,\lambda_{n-2}\}$, and the cone order of each of these cone points is $p$, then there is a subgroup $K$ of $H_{0}$, acting freely on $C^{p}(\lambda_{1},\ldots,\lambda_{n-2})$, such that $R=C^{p}(\lambda_{1},\ldots,\lambda_{n-2})/K$ and $G=H_{0}/K$.
\end{rema}

%%%%%%%%%%%%%%%%
\subsection{Automorphisms}
Let $M<{\rm PSL}_{2}({\mathbb C})$ be the (finite) group of those M\"obius transformations keeping invariant the collection $\{\infty,0,1,\lambda_{1},\ldots,\lambda_{n-2}\}$. As $H$ is a normal subgroup of ${\rm Aut}(S)$ (due to its uniqueness), there is a natural homomorphism $\theta:{\rm Aut}(S) \to M$, whose kernel is $H$. As there is a biholomorphism $\phi:S \to X_{p,n}$ conjugating $H$ onto $H_{p,n}$, and $X_{p,n}$ is uniformized by the derived subgroup $\Gamma'_{p,n}$ (a characteristic subgroup of $\Gamma_{p,n}$), every (orbifold) automorphism of the orbifold $S/H$ lifts to an automorphism of $S$, so  $\theta$ is also surjective. In this way,  we obtain a short exact sequence
$$1 \to H \to {\rm Aut}(S) \stackrel{\theta}{\to} M \to 1,$$ which permits to compute explicitly ${\rm Aut}(S)$ (see \cite{GHL}).

%%%%%%%%%%%%%%%%%%%%%%%
\subsection{Moduli of generalized Fermat curves}\label{Sec:moduli}
As already mentioned, a generalized Fermat curve of type $(p,2)$ is algebraically described by the classical Fermat curve $x_{1}^{p}+x_{2}^{p}+x_{3}^{p}=0$ (in the projective plane) and it has no moduli. On the other hand, generalized Fermat curves of type $(p,n)$, where $n \geq 3$, form a $(n-2)$-dimensional family and they have the explicit algebraic models $C^{p}(\lambda_{1},\ldots,\lambda_{n-2})$, where $(\lambda_{1},\ldots,\lambda_{n-2})$ belongs to the connected open set 
{\small
$$V_{n}=\{(\lambda_{1},\ldots,\lambda_{n-2}) \in {\mathbb C}^{n-2}: \lambda_{1},\ldots,\lambda_{n-2} \in {\mathbb C} \setminus \{0,1\}, \; \lambda_{j} \neq \lambda_{r}, j \neq r\} \subset {\mathbb C}^{n-2}.$$ 
}

\begin{rema}
In \cite{GHL}, the above region was denoted by the symbol ${\mathcal P}_{n}$.
\end{rema}

The region $V_{n}$ admits the following holomorphic automorphisms:
{\small
$$t(\lambda_{1},\ldots,\lambda_{n-2})=\left(\frac{\lambda_{n-2}}{\lambda_{n-2}-1}, \frac{\lambda_{n-2}}{\lambda_{n-2}-\lambda_{1}},\ldots,\frac{\lambda_{n-2}}{\lambda_{n-2}-\lambda_{n-3}}\right), \;
b(\lambda_{1},\ldots,\lambda_{n-2})=\left(\frac{1}{\lambda_{1}},\ldots,\frac{1}{\lambda_{n-2}}\right).$$
}

Let us denote by ${\mathfrak S}_{m}$ the permutation group on $m$ points and set ${\mathbb G}_{n}=\langle t,b\rangle$.

Let $\sigma \in {\mathfrak S}_{n+1}$ and $(\lambda_{1},\ldots,\lambda_{n-2}) \in V_{n}$. If we set $p_{1}=\infty, p_{2}=0, p_{3}=1, p_{4}=\lambda_{1}, \ldots, p_{n+1}=\lambda_{n-2}$, then 
we may consider the unique M\"obius transformation $M_{\sigma}$ such that $M_{\sigma}(p_{\sigma^{-1}(1)})=\infty$, $M_{\sigma}(p_{\sigma^{-1}(2)})=0$ and $M_{\sigma}(p_{\sigma^{-1}(3)})=1$. If, for $j \geq 4$, we set $M_{\sigma}(p_{\sigma^{-1}(j)})=\lambda_{j-3}^{\sigma}$, then $(\lambda_{1}^{\sigma},\ldots, \lambda_{n-1}^{\sigma}) \in V_{n}$. In this way, we obtain a bijective map 
$$\Theta(\sigma): V_{n} \to V_{n}: (\lambda_{1},\ldots, \lambda_{n-1}) \mapsto (\lambda_{1}^{\sigma},\ldots, \lambda_{n-1}^{\sigma}).$$

One may check that $\Theta((1,2))=b$ and $\Theta((1,2,\ldots,n+1))=t$. In particular, this allowd us to note that 
$\Theta:{\mathfrak S}_{n+1} \to {\mathbb G}_{n}$ is a surjective homomorphism with 
${\mathbb G}_{3} \cong {\mathfrak S}_{3}$ and, for $n \geq 4$, ${\mathbb G}_{n} \cong {\mathfrak S}_{n+1}$ (details can be found in \cite{Yasmina}).

\begin{propo}[Section 4 in \cite{GHL}] \label{proposicion3}
If $\vec{\lambda}, \vec{\delta} \in V_{n}$, then  
$C^{k}(\vec{\lambda})$ and $C^{k}(\vec{\delta})$
 are conformally equivalent
 if and only if $\vec{\lambda}$ and $\vec{\delta}$ belong to the same ${\mathbb G}_{n}$-orbit. In particular, the quotient orbifold $V_{n}/{\mathbb G}_{n}$ is a model for the moduli space ${\mathcal H}_{p,n}$ of generalized Fermat curves of type $(p,n)$.
 \end{propo}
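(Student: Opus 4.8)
The plan is to reduce the conformal classification of the curves $C^{p}(\vec\lambda)$ to the Möbius classification of their branch configurations on $\widehat{\mathbb C}$, and then to recognize the latter as precisely the ${\mathbb G}_{n}$-action encoded by $\Theta$. The pivotal input is the uniqueness of the generalized Fermat group.

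For the forward implication, I would start from a biholomorphism $f:C^{p}(\vec\lambda)\to C^{p}(\vec\delta)$ and consider the two standard linear groups $H_{0}(\vec\lambda)$ and $H_{0}(\vec\delta)$. The conjugate $f H_{0}(\vec\lambda) f^{-1}$ is again a generalized Fermat group of type $(p,n)$ on $C^{p}(\vec\delta)$, so by the uniqueness result of \cite{HKLP} it must coincide with $H_{0}(\vec\delta)$. Consequently $f$ descends to a biholomorphism of the quotient orbifolds, both of which are the Riemann sphere carrying $n+1$ cone points of common order $p$, located at the branch sets of $\vec\lambda$ and of $\vec\delta$. Such an orbifold isomorphism is a Möbius transformation sending the first branch set bijectively onto the second, the equality of all cone orders imposing no further condition. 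Recording this bijection by a permutation $\sigma\in{\mathfrak S}_{n+1}$, and using that the transformation is forced, by its effect on any three of the points, to be exactly the normalizing map $M_{\sigma}$ defining $\Theta(\sigma)$, I would conclude $\vec\delta=\Theta(\sigma)(\vec\lambda)$; since $\Theta$ surjects onto ${\mathbb G}_{n}$, the vectors lie in one ${\mathbb G}_{n}$-orbit.

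For the converse, assume $\vec\delta=\Theta(\sigma)(\vec\lambda)$. Then the Möbius map $M_{\sigma}$ carries the branch set of $\vec\lambda$ onto that of $\vec\delta$ and hence is an orbifold isomorphism between the two quotients. To lift it to the curves I would appeal to the Fuchsian description recalled in Section \ref{Fuchsiansetting}: $M_{\sigma}$ conjugates a Fuchsian group $\Gamma_{p,n}$ uniformizing the first orbifold onto one uniformizing the second, and because the derived subgroup $\Gamma'_{p,n}$ is characteristic, the same conjugation carries the torsion-free uniformizing group of one cover onto that of the other, descending to a biholomorphism $C^{p}(\vec\lambda)\cong C^{p}(\vec\delta)$.

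The main obstacle is the forward direction, and it is overcome solely through the uniqueness of $H$: without it an arbitrary biholomorphism of the curves need not preserve the generalized Fermat structure and so need not descend to the orbifolds. Once the problem is transported to the sphere, the remaining content is the elementary fact that two $(n+1)$-point configurations are Möbius-equivalent exactly when their normalized parameter vectors differ by one of the finitely many renormalizations assembled in ${\mathbb G}_{n}$, and that these are precisely the images of the surjection $\Theta$ established above. The closing statement then follows at once, since isomorphism classes of type $(p,n)$ generalized Fermat curves are thereby in bijection with the ${\mathbb G}_{n}$-orbits on $V_{n}$, making $V_{n}/{\mathbb G}_{n}$ a model for ${\mathcal H}_{p,n}$.
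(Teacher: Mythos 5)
The paper does not prove this proposition itself --- it is imported from Section 4 of \cite{GHL} --- so there is no internal proof to compare against; judged on its own terms, your argument is correct and is precisely the one the paper's setup anticipates. The forward direction correctly uses the uniqueness of the generalized Fermat group from \cite{HKLP} to force any biholomorphism to descend to a M\"obius transformation between the quotient orbifolds, which is then pinned down as some $M_{\sigma}$ by its values on three branch points, giving $\vec{\delta}=\Theta(\sigma)(\vec{\lambda})$; the converse correctly lifts $M_{\sigma}$ through the derived (characteristic) subgroup in the Fuchsian uniformization of Section \ref{Fuchsiansetting}.
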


%%%%%%%%%%%%%%%%
%%%%%%%%%%%%%%%%
\section{Description of acting freely subgroups of generalized Fermat groups}\label{Sec:libre}
In this section, we assume $p$ is a prime integer, $S=C^{p}(\lambda_{1},\ldots,\lambda_{n-2})$ (or $C^{p}(\varnothing)$ if $n=2$), $H=H_{0}$ and 
$\{a_{1},\ldots,a_{n+1}\}$ is the set of standard generators as previously described in Section \ref{Sec:algebra}. 
In Section \ref{sinfijos}, we provide a description of those non-trivial subgroups $K$ of $H$ acting freely on $S$ (Proposition \ref{lema1}) and, in Section \ref{Sec:curvas}, we describe an algebraic curve representing $S/K$.

In the following, for each $r \geq 1$, we fix an ordering of the elements of ${\mathbb Z}_{p}^{r}$, say ${\mathbb Z}_{p}^{r}=\{u_{0}=1,u_{1},\ldots,u_{p^{r}-1}\}$.

\subsection{Freely acting subgroups of $H$}\label{sinfijos}
For each $r \in \{1,\ldots,n-1\}$, we let
${\mathcal F}_{r,p}^{n}$ be the collection of tuples  $(I_{1},\ldots,I_{p^{r}-1})$, where $\{I_{1},\ldots,I_{p^{r}-1}\}$ is a (disjoint) partition of the set $\{1,\ldots,n+1\}$ (some of the $I_{j}$ might be the empty set) such that: 
\begin{enumerate}[label=(\alph*),leftmargin=*,align=left]
\item[(i)] $u_{1}^{\#I_{1}}\cdots u_{p^{r}-1}^{\#I_{p^{r}-1}}=1$ and 
\item[(ii)] $\langle u_{k}: k \in \{1,\ldots,p^{r}-1\}, \;  I_{k}\neq \varnothing\rangle={\mathbb Z}_{p}^{r}$. 
\end{enumerate}

\begin{rema}
As $n+1 =\sum_{k=1}^{p^{r}-1} \#I_{k}$, it follows that, for $p=2$ (so, $n \geq 4$) and $r \in \{1,2\}$, at least one of the $I_{k}$ has cardinality at least two. It might be that some ${\mathcal F}_{r,p}^{n}$ is the empty set.
\end{rema}

Set $A_{n,p}=\{r \in \{1,\ldots,n-1\}:  {\mathcal F}_{r,p}^{n} \neq \varnothing\}$.

If $r \in A_{n,p}$ and $P=(I_{1},\ldots,I_{p^{r}-1}) \in {\mathcal F}_{r,p}^{n}$, then condition (i) asserts that there is a homomorphism
$\rho_{r,P}:H \to {\mathbb Z}_{p}^{r}$, defined by $\rho_{r,P}(a_{j})=u_{k}$ ($j \in I_{k}$). Condition (ii) asserts that $\rho_{r,P}$ is surjective.
If we denote by $K_{P} \cong {\mathbb Z}_{p}^{n-r}$ its kernel, then (as the only elements of $H$ acting with fixed points are the powers of $a_{1},\ldots,a_{n+1}$) it can be seen that $K_{P}$ acts freely on $S$. 
This provides one direction of the next description.

\begin{propo}\label{lema1} Let $\{I\} \neq K<H$.  Then $K$ acts freely on $S$ if and only if there are some $r \in A_{n,p}$ and some $P \in {\mathcal F}_{r,p}^{n}$ such that 
$K=K_{P}$.
\end{propo}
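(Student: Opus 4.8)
The plan is to establish the remaining (forward) direction of Proposition~\ref{lema1}: namely, that every nontrivial $K < H$ acting freely on $S$ arises as some $K_P$. The paragraph preceding the statement already supplies the converse, so I focus on showing that a freely acting $K$ determines a partition in some ${\mathcal F}_{r,p}^n$. First I would set $r$ so that $H/K \cong {\mathbb Z}_p^r$; this is forced since $H \cong {\mathbb Z}_p^n$ is an elementary abelian $p$-group, every subgroup is a direct factor, and the quotient is again elementary abelian of some rank $r$ with $0 < r \leq n$. Let $q:H \to {\mathbb Z}_p^r$ be the quotient homomorphism. The key translation is this: because the \emph{only} elements of $H$ acting with fixed points are the nontrivial powers of the standard generators $a_1,\ldots,a_{n+1}$ (this is exactly property (iii) of a standard set, recalled in Section~\ref{Sec:standard}), the statement ``$K$ acts freely'' is equivalent to ``$K$ contains no nontrivial power of any $a_j$,'' i.e. $K \cap \langle a_j \rangle = \{1\}$ for each $j$. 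Since $\langle a_j\rangle \cong {\mathbb Z}_p$ has no proper nontrivial subgroups, this is in turn equivalent to $q(a_j) \neq 1$ for every $j$.

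Next I would read off the partition. Using the fixed ordering ${\mathbb Z}_p^r = \{u_0 = 1, u_1, \ldots, u_{p^r-1}\}$, define $I_k = \{\, j \in \{1,\ldots,n+1\} : q(a_j) = u_k \,\}$ for $k = 1,\ldots,p^r-1$. Because each $q(a_j) \neq 1 = u_0$, the index $j$ never falls into an ``$I_0$'' slot, so $(I_1,\ldots,I_{p^r-1})$ is genuinely a disjoint partition of the full set $\{1,\ldots,n+1\}$ (with some blocks possibly empty). I must then verify the two defining conditions of ${\mathcal F}_{r,p}^n$. Condition~(i), namely $u_1^{\#I_1}\cdots u_{p^r-1}^{\#I_{p^r-1}} = 1$, follows from applying $q$ to the standard relation $a_1 a_2 \cdots a_{n+1} = 1$: the left side becomes $\prod_{j} q(a_j) = \prod_{k} u_k^{\#I_k}$, which therefore equals $q(1)=1$. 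Condition~(ii), surjectivity of $\rho_{r,P}$ i.e. $\langle u_k : I_k \neq \varnothing\rangle = {\mathbb Z}_p^r$, follows because $q$ itself is surjective and its image is generated by the values $q(a_1),\ldots,q(a_{n+1})$ (as the $a_j$ generate $H$), and those values are exactly the $u_k$ indexing nonempty blocks.

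To finish I would confirm that the homomorphism $\rho_{r,P}$ built from this partition $P = (I_1,\ldots,I_{p^r-1})$ coincides with $q$, so that $K_P = \ker \rho_{r,P} = \ker q = K$. By construction $\rho_{r,P}(a_j) = u_k = q(a_j)$ whenever $j \in I_k$, and since the $a_j$ generate $H$ the two homomorphisms agree everywhere; hence $r \in A_{n,p}$, $P \in {\mathcal F}_{r,p}^n$, and $K = K_P$, as required. The conceptual heart of the argument is the equivalence ``free action $\iff$ $q(a_j)\neq 1$ for all $j$,'' which reduces a geometric statement about fixed points to a purely group-theoretic condition on the images of the standard generators; verifying conditions (i) and (ii) is then essentially bookkeeping. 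The main subtlety to watch is the book-keeping of empty blocks and making sure the partition is indexed so that no generator maps to the identity $u_0$, which is precisely what the freeness hypothesis guarantees.
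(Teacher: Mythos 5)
Your argument is correct and follows essentially the same route as the paper's proof: realize $K$ as the kernel of a surjective homomorphism onto ${\mathbb Z}_p^r$, translate freeness into the condition $q(a_j)\neq 1$ via property (iii) of the standard generators, and read off the partition whose conditions (i) and (ii) come from $a_1\cdots a_{n+1}=1$ and surjectivity. The only (harmless) imprecision is the bound $0<r\leq n$; one should note $r\leq n-1$ because $K\neq\{I\}$, which is needed for $r\in A_{n,p}$.
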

\begin{proof}
Above we have already seen one direction. Now, let $K \notin \{ \{I\}, H\}$ be a subgroup of $H$ acting freely on $S$.
The subgroup $K$ is the kernel of a suitable surjective homomorphism $\rho:H \to {\mathbb Z}_{p}^{r}=\{u_{0}=1,u_{1},\ldots,u_{p^{r}-1}\}$, for some $1 \leq r \leq n-1$.

As the only elements of $H$ acting with fixed points on $S$ are the powers of its standard generators, $K$ is torsion free if and only if $\rho(a_{j}) \neq 1$, for every $j=1,\ldots,n+1$.

Set $I_{k}=\{j \in \{1,\ldots,n+1\}: \rho(a_{j})=u_{k}\}$, for $k=1,\ldots,p^{r}-1$. We note that $K$ is torsion free if
 $\{I_{1},\ldots,I_{p^{r}-1}\}$ is a (disjoint) partition of the set $\{1,\ldots,n+1\}$. Some of the $I_{k}$ might be the empty set. 
 Also, as $a_{1}\cdots a_{n+1}=1$, we must have $u_{1}^{\#I_{1}}\cdots u_{p^{r}-1}^{\#I_{p^{r}-1}}=1$, and the surjectivity condition asserts $\langle u_{k}: I_{k}\neq \varnothing, k \in \{1,\ldots,p^{r}-1\}\rangle={\mathbb Z}_{p}^{r}$. It follows that $r \in A_{n,p}$, that $P=(I_{1},\ldots,I_{p^{r}-1}) \in {\mathcal F}_{r,p}^{n}$, that $\rho=\rho_{r,P}$ and that $K=K_{P}$.
\end{proof}

\begin{coro}\label{coro:m=n-1}
If $(p,n)=(2,n)$, where $n \geq 4$, and $K \cong {\mathbb Z}_{2}^{n-1}$ is a subgroup of $H$, acting freely on $S$, then $n$ is odd. Moreover, in this situation, $K=\langle a_{1}a_{2}, a_{1}a_{3}, \ldots, a_{1}a_{n+1}\rangle$ and $S/K$ is the hyperelliptic Riemann surface of genus $(n-1)/2$ defined by the curve $y^{2}z^{n-2}=x(x-z)\prod_{j=1}^{n-2}(x-\lambda_{j}z)$.
\end{coro}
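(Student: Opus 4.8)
The plan is to deduce the group-theoretic statements directly from Proposition \ref{lema1} and to obtain the algebraic equation by an explicit computation on the model $C^{2}(\lambda_{1},\ldots,\lambda_{n-2})$. First I would note that a freely acting $K \cong {\mathbb Z}_{2}^{n-1}$ forces $r=1$ in the notation of Section \ref{sinfijos}, since $|K|=2^{n-r}=2^{n-1}$. For $p=2$ and $r=1$ we have ${\mathbb Z}_{p}^{r}=\{u_{0}=1,u_{1}\}$ and $p^{r}-1=1$, so a tuple in ${\mathcal F}_{1,2}^{n}$ consists of a single block $I_{1}$, which (being a partition of $\{1,\ldots,n+1\}$) must equal all of $\{1,\ldots,n+1\}$. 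Condition (i) then reads $u_{1}^{\# I_{1}}=1$, i.e. $n+1$ is even, so $n$ is odd; condition (ii) is automatic. Hence ${\mathcal F}_{1,2}^{n}$ is empty when $n$ is even and a single tuple when $n$ is odd, and by Proposition \ref{lema1} the corresponding $K=K_{P}$ is unique: it is the kernel of the parity homomorphism $\rho:H\to{\mathbb Z}_{2}$ with $\rho(a_{j})=u_{1}$ for all $j$, namely the subgroup of products of an even number of the $a_{j}$. Each $a_{1}a_{j}$ lies in this kernel, the $n-1$ elements $a_{1}a_{2},\ldots,a_{1}a_{n}$ are ${\mathbb F}_{2}$-independent and span the even-weight subgroup, and using $n$ odd one checks $a_{1}a_{n+1}=(a_{1}a_{2})\cdots(a_{1}a_{n})$; thus $K=\langle a_{1}a_{2},\ldots,a_{1}a_{n+1}\rangle$, as claimed.

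Next I would identify the quotient. The group $G=H/K\cong{\mathbb Z}_{2}$ acts on $R=S/K$ with $R/G=S/H={\mathcal O}$, whose underlying surface is $\widehat{\mathbb C}$. The key point is that $R\to\widehat{\mathbb C}$ is branched at exactly the $n+1$ points of ${\mathcal B}$: over $p_{j}$ the $H$-stabilizer is $\langle a_{j}\rangle$, and since $\rho(a_{j})=u_{1}\neq 1$ its image in $G$ is nontrivial, so every $p_{j}$ is a ramification value of the degree-two map. Riemann-Hurwitz then gives $2g_{R}-2=-4+(n+1)$, i.e. $g_{R}=(n-1)/2\geq 2$, and a genus $\geq 2$ double cover of $\widehat{\mathbb C}$ is hyperelliptic.

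To obtain the explicit equation I would work with invariant functions. Take $x=\pi_{0}=-(x_{2}/x_{1})^{2}$, which is $H$-invariant and descends to the coordinate on $\widehat{\mathbb C}=R/G$, and $y=(x_{2}x_{3}\cdots x_{n+1})/x_{1}^{n}$. Under each $a_{j}$ one computes $y\mapsto -y$ (using $n$ odd for the factor $x_{1}^{n}$), so $y$ is $K$-invariant and $G$-anti-invariant, and $x,y$ generate the function field of $R$ with $G$ acting by $y\mapsto -y$. Since $y^{2}=\prod_{j=2}^{n+1}(x_{j}^{2}/x_{1}^{2})$, substituting the defining relations turns each factor with $3\leq j\leq n+1$ into $x-p_{j}$ (that is $x-1,x-\lambda_{1},\ldots,x-\lambda_{n-2}$), while the remaining factor $x_{2}^{2}/x_{1}^{2}$ equals $-x$; hence $y^{2}=-x(x-1)\prod_{j=1}^{n-2}(x-\lambda_{j})$. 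Absorbing the sign by rescaling $y$ gives $y^{2}=x(x-1)\prod_{j=1}^{n-2}(x-\lambda_{j})$, whose degree-$n$ plane homogenization is precisely $y^{2}z^{n-2}=x(x-z)\prod_{j=1}^{n-2}(x-\lambda_{j}z)$.

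The first two parts (that $n$ is odd and that $K$ is the stated even-weight subgroup) are essentially immediate from Proposition \ref{lema1}. The care is all in the last paragraph: verifying that $x$ and $y$ really generate the whole function field of the quotient (equivalently, that $y$ realizes the full degree-two extension and separates the two sheets) and tracking the normalization so that the branch points land exactly at $\infty,0,1,\lambda_{1},\ldots,\lambda_{n-2}$. I expect this branch-locus bookkeeping via stabilizers, together with the final sign and homogenization check, to be the only genuinely delicate points.
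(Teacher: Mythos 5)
Your proposal is correct and follows essentially the same route as the paper: Proposition \ref{lema1} with $r=1$ forces the single block $I_{1}=\{1,\ldots,n+1\}$ and the parity condition $u_{1}^{n+1}=1$ (so $n$ odd), identifies $K$ as the even-weight kernel, and then realizes $S/K$ as the degree-two cover of $S/H=\widehat{\mathbb C}$ branched over the $n+1$ cone points. The paper's own proof simply asserts the resulting hyperelliptic equation, whereas you supply the explicit invariant-function computation (in the spirit of Section \ref{Sec:curvas}); that extra detail is sound, including the use of $n$ odd to check $a_{1}a_{n+1}=(a_{1}a_{2})\cdots(a_{1}a_{n})$ and the sign absorption at the end.
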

\begin{proof}
As $r=1$, we are just considering the partition $I_{1}=\{1,\ldots,n+1\}$ and ${\mathbb Z}_{2}=\{1,u_{1}\}$. The condition $u_{1}^{\#I_{1}}=u_{1}^{n+1}=1$ is equivalent for $n$ to be odd. Now, under this condition on $n$, we obtain $K=\langle a_{1}a_{2}, a_{1}a_{3}, \ldots, a_{1}a_{n+1}\rangle$. As the Riemann surface $S/K$ is a two-fold branched cover of $S/H$, we obtain that $S/K$ is a hyperelliptic curve as described.
\end{proof}

%%%%%%%%%%%%%%%%%%%%
\subsection{Algebraic curves for $S/K$}\label{Sec:curvas}
Let $K<H$ be a freely acting subgroup. By Proposition \ref{lema1}, there is some $r \in A_{n,p}$ and some 
$P=(I_{1},\ldots,I_{p^{r}-1}) \in {\mathcal F}_{r,p}^{n}$, such that $K=K_{P}$. Classical geometric invariant theory \cite{Mumford} permits to construct an algebraic curve model for $S/K_{P}$. For it, consider the affine model $S^{0}$, obtained by setting $x_{n+1}=1$ in $C^{p}(\lambda_{1},\ldots,\lambda_{n-2})$, which is invariant under $H$. Next, we obtain a set of generators of the algebra of invariants ${\mathbb C}[x_{1},\ldots,x_{n}]^{K_{P}}$ (which is known to be finitely generated by Hilbert-Noether's theorem \cite{Noether1}). As the elements of $K_{P}$ are diagonal matrices, we may find such a set of generators formed by monomials. 
As a non-trivial element of $K_{P}$ is a diagonal matrix, whose diagonal is formed by values equal to powers of $e^{2 \pi i/p}$, some 
of these  invariant monomials are given by $t_{1}=x_{1}^{p},\ldots, t_{n}=x_{n}^{p}$,  and others will be of the form $s_{k}=x_{1}^{l_{1,k}}x_{2}^{l_{2,k}}\cdots x_{n}^{l_{n,k}}$, for suitables $l_{j,k} \in \{0,1,\ldots,p-1\}$, $k=1,\ldots,m_{P}$. 
In this case, if $\psi:S^{0} \to {\mathbb C}^{n+m_{P}}$ is defined by $\psi(x_{1},\ldots,x_{n})=(t_{1},\ldots,t_{n},s_{1},\ldots,s_{m_{P}})$, then the (possible singular) curve $\psi(S^{0})\subset {\mathbb C}^{n+m_{P}}$, which is defined by
{\small
\begin{equation}\label{curvita}
\left\{\begin{array}{c}
t_{1}+t_{2}+t_{3}=0, \; \lambda_{1}t_{1}+t_{2}+t_{4}=0, \ldots, \lambda_{n-3}t_{1}+t_{2}+t_{n}, \; \lambda_{n-2}t_{1}+t_{2}+1=0,\\
s_{k}^{p}=\prod_{j =1}^{n} t_{j}^{l_{j,k}}, \; k=1,\ldots,m_{P}.
\end{array}\right\},
\end{equation}
}
produces an affine model for $S^{0}/K_{P}$ (this being an affine model for $S/K_{P}$).  From the first line (of the above set of equations), we observe that 
{\small
\begin{equation}\label{reemplaza}
\begin{array}{c}
t_{2}=t_{2}(t_{1})=-(1+\lambda_{n-2}t_{1}), \; t_{3}=t_{3}(t_{1})=1+(\lambda_{n-2}-1)t_{1},\\
t_{4}=t_{4}(t_{1})=1+(\lambda_{n-2}-\lambda_{1})t_{1}, \ldots, t_{n}=t_{n}(t_{1})=1+(\lambda_{n-2}-\lambda_{n-3})t_{1}.
\end{array}
\end{equation}
}

In this way, if we consider the projection map
$$Q:(t_{1},\ldots,t_{n},s_{1},\ldots,s_{m_{P}}) \in {\mathbb C}^{n+m_{P}} \mapsto (t_{1},s_{1},\ldots,s_{m_{P}}) \in {\mathbb C}^{1+m_{P}},$$
then we observe that $Q(\psi(S^{0}))$ is also an affine model of $S^{0}/K_{P}$.
If we now replace each $t_{2},\ldots, t_{n}$, in terms of the variable $t_{1}$ (as in \eqref{reemplaza}), in the equations on the second line of \eqref{curvita}, then this last affine model is given by the following $p$-cyclic gonal curve: 
{\small
\begin{equation}
\left\{\begin{array}{c}
s_{k}^{p}=\prod_{j=1}^{n}t_{j}^{l_{j,k}}; \; k=1,\ldots,m_{P}.
\end{array}\right\} \subset {\mathbb C}^{1+m_{P}}.
\end{equation}
}

%%%%%%%%%%%%%%%%%%%%
\subsection{Some examples: ${\bf (p,n)=(2,n)}$, ${\bf n \geq 4}$}\label{ejemplitos}

\subsubsection{}
Let $n \geq 4$ and $r=2$. If $P=(I_{1},I_{2},I_{3}) \in {\mathcal F}_{2,2}^{n}$, where $\#I_{1}=n-1$, then 
(since $\#I_{1}+\#I_{2}+\#I_{3}=n+1$) we have the following two cases.

\begin{enumerate}[label=(\alph*),leftmargin=*,align=left]

\item If $n \geq 4$ is even, then $\#I_{2}=\#I_{3}=1$. Up to permutation of indices, we may assume 
$$I_{1}=\{1,\ldots,n-1\}, \; I_{2}=\{n\}, \; I_{3}=\{n+1\}.$$

\item If $n \geq 5$ is odd, then either (i) $\#I_{2}=2$ and  $\#I_{3}=0$ or (ii) $\#I_{2}=0$ and  $\#I_{3}=2$. Let us consider the case (i) (the other is similar). Up to permutation of indices, we may assume 
$$I_{1}=\{1,\ldots,n-1\}, \; I_{2}=\{n,n+1\}, \; I_{3}=\varnothing.$$

\end{enumerate}

In any of the above two cases, $K_{P}=\langle a_{1}a_{2},a_{1}a_{3},\ldots,a_{1}a_{n-1}\rangle \cong {\mathbb Z}_{2}^{n-2}$. The $K_{P}$-invariant monomials are $t_{1}=x_{1}^{2},\ldots,t_{n}=x_{n}^{2}$, $s_{1}=x_{1}x_{2}\cdots x_{n-1}$ and $s_{2}=x_{n}$. In this way, $S/K_{P}$ is the hyperelliptic Riemann surface (of genus $n-2$) defined by
{\small
$$S/K_{P}: \left\{\begin{array}{l}
s_{1}^{2}= -t_{1}(1+\lambda_{n-2}t_{1})(1+(\lambda_{n-2}-1)t_{1})\prod_{j=1}^{n-4}(1+(\lambda_{n-2}-\lambda_{j})t_{1})\\
s_{2}^{2}=1+(\lambda_{n-2}-\lambda_{n-3})t_{1}
\end{array}
\right\}.
$$
}
and  $H/K_{P}=\langle  A(t_{1},s_{1},s_{2})=(t_{1},-s_{1},s_{2}), B(t_{1},s_{1},s_{2})=(t_{1},s_{1},-s_{2})  \rangle \cong {\mathbb Z}_{2}^{2}$, where $A$ corresponds to the hyperelliptic involution.

\subsubsection{}
Let $n=5$, $r=2$ and $P=(I_{1},I_{2},I_{3}) \in {\mathcal F}_{2,2}^{5}$, where $\#I_{1}=\#I_{2}=\#I_{3}=2$. Up to permutation of the indices, we may assume 
$I_{1}=\{1,2\}, \; I_{2}=\{3,4\}, \; I_{3}=\{5,6\}.$
In this case, $K_{P}=\langle a_{1}a_{2}, a_{3}a_{4}, a_{1}a_{3}a_{5}\rangle \cong {\mathbb Z}_{2}^{3}$. The $K_{P}$-invariant monomials are $t_{1}=x_{1}^{2},\ldots,t_{5}=x_{5}^{2}$, $s_{1}=x_{1}x_{2}x_{5}$, $s_{2}=x_{3}x_{4}x_{5}$ and $s_{3}=x_{1}x_{2}x_{3}x_{4}$. In this way, $S/K_{P}$ is the Riemann surface of genus three defined by
{\small
$$S/K_{P}:\left\{\begin{array}{l}
s_{1}^{2}=-t_{1}(1+\lambda_{3}t_{1})(1+(\lambda_{3}-\lambda_{2})t_{1})\\
s_{2}^{2}=(1+(\lambda_{3}-1)t_{1})(1+(\lambda_{3}-\lambda_{1})t_{1})(1+(\lambda_{3}-\lambda_{2})t_{1})\\
s_{3}^{2}=-t_{1}(1+\lambda_{3}t_{1})(1+(\lambda_{3}-1)t_{1})(1+(\lambda_{3}-\lambda_{1})t_{1})
\end{array}
\right\}
$$
}
and $H/K_{P}=\langle  A(t_{1},s_{1},s_{2},s_{3})=(t_{1},-s_{1},s_{2},-s_{3}), B(t_{1},s_{1},s_{2},s_{3})=(t_{1},s_{1},-s_{2},-s_{3})   \rangle \cong {\mathbb Z}_{2}^{2}$. In this case, $A$, $B$ and $AB$ each one has $4$ fixed points.

In the particular case $\lambda_{3}=\lambda_{1}/\lambda_{2}$, it happens that $C^{2}(\lambda_{1},\lambda_{2},\lambda_{3})$ also admits the extra conformal involution
$T([x_{1}:\cdots:x_{6}])= [x_{2}:\sqrt{\lambda_{1}} x_{1}: x_{4}: \sqrt{\lambda_{1}} x_{3}: \sqrt{\lambda_{2}} x_{6}: \sqrt{\lambda_{3}} x_{5}],$
which satisfies $\pi \circ T=\tau \circ \pi$, where $\tau(x)=\lambda_{1}/x$. If we choose the square roots such that $\sqrt{\lambda_{1}}=\sqrt{\lambda_{2}} \sqrt{\lambda_{3}}$, then $T^{2}=I$. As $T$ normalizes $K_{P}$, it induces a conformal involution $E$ of $S/K_{P}$, which is given by
$E(t_{1},s_{1},s_{2},s_{3})=(\lambda_{2} t_{2}/\lambda_{1}t_{5}, \lambda_{2}^{2}s_{1}/\lambda_{1} t_{5}^{2}, \lambda_{2}^{2} s_{2}/\lambda_{1} t_{5}^{2}, \lambda_{2}^{2}s_{3}/\lambda_{1} t_{5}^{2})$, where 
$t_{2}=-(1+\lambda_{3}t_{1})$ and $t_{5}=1+(\lambda_{3}-\lambda_{2})t_{1}$. It can be checked that $E$ is the hyperelliptic involution of $S/K_{P}$.

\subsubsection{}
Let $n \geq 4$, $r=3$, ${\mathbb Z}_{2}^{3}=\langle u_{1}, u_{2}, u_{3}\rangle$ and $u_{4}=u_{1}u_{2}$, $u_{5}=u_{2}u_{3}$, $u_{6}=u_{1}u_{3}$ and $u_{7}=u_{1}u_{2}u_{3}$. If $P=(I_{1},\ldots,I_{7}) \in {\mathcal F}_{3,2}^{n}$, where $\#I_{1}=n-2$, then we have the following cases to consider.

\begin{enumerate}[label=(\alph*),leftmargin=*,align=left]

\item If $n \geq 4$ is even,  $\#I_{2}=\#I_{3}=\#I_{5}=1$ and $\#I_{4}=\#I_{6}=\#I_{7}=0$. Up to permutation of indices, we may assume 
$I_{1}=\{1,\ldots,n-2\}, \; I_{2}=\{n-1\}, \; I_{3}=\{n\}, \; I_{5}=\{n+1\}.$

\item If $n \geq 5$ is odd, $\#I_{2}=\#I_{3}=\#I_{7}=1$ and  $\#I_{4}=\#I_{5}=\#I_{6}=0$. Up to permutation of indices, we may assume 
$I_{1}=\{1,\ldots,n-2\}, \; I_{2}=\{n-1\}, \; I_{3}=\{n\}, \; I_{7}=\{n+1\}.$

\end{enumerate}

In any of these two cases, $K_{P}=\langle a_{1}a_{2},a_{1}a_{3},\ldots,a_{1}a_{n-2}\rangle \cong {\mathbb Z}_{2}^{n-3}$. The $K_{P}$-invariant monomials are $t_{1}=x_{1}^{2},\ldots,t_{n}=x_{n}^{2}$, $s_{1}=x_{1}x_{2}\cdots x_{n-2}$, $s_{2}=x_{n-1}$ and $s_{3}=x_{n}$. In this way, $S/K_{P}$ is the hyperelliptic Riemann surface (of genus $2n-5$) defined by
{\small
$$
S/K_{P}: \left\{\begin{array}{l}
s_{1}^{2}= -t_{1}(1+\lambda_{n-2}t_{1})(1+(\lambda_{n-2}-1)t_{1})\prod_{j=1}^{n-5}(1+(\lambda_{n-2}-\lambda_{j})t_{1})\\
s_{2}^{2}=1+(\lambda_{n-2}-\lambda_{n-4})t_{1}\\
s_{3}^{2}=1+(\lambda_{n-2}-\lambda_{n-3})t_{1}
\end{array}
\right\}
$$
}
and $H/K_{P}=\langle A(t_{1},s_{1},s_{2},s_{3})=(t_{1},-s_{1},s_{2},s_{3}), B(t_{1},s_{1},s_{2},s_{3})=(t_{1},s_{1},-s_{2},s_{3}), \\ C(t_{1},s_{1},s_{2},s_{3})=(t_{1},s_{1},s_{2},-s_{3}) \rangle \cong {\mathbb Z}_{2}^{3}$, where $A$ corresponds to the hyperelliptic involution.

%%%%%%%%%%%%%%%%%
\subsection{An application to hyperelliptic quotients}\label{Sec:casohiperp=2}
Let $R$ be a hyperelliptic Riemann surface and $G \cong {\mathbb Z}_{2}^{r}$ be a group of its conformal automorphisms such that $R/G$ has genus zero. Each cone point of $R/G$ has order two and the number of them is $n+1$ for some $n \geq 4$.
 Up to a M\"obius transformation, we may assume these cone points to be $\infty,0,1,\lambda_{1},\ldots,\lambda_{n-2}$. As noted in Remark \ref{observa}, there is a 
subgroup $K \cong {\mathbb Z}_{2}^{m}$ of $H$, acting freely on $S=C^{(2)}(\lambda_{1},\ldots,\lambda_{n-2})$, such that $R=S/K$ and $G=H/K$, i.e., $m=n-r$.

\begin{lemm}\label{cocientes}
In the above.
\begin{enumerate}[label=(\alph*),leftmargin=*,align=left]

\item If $n$ is odd, then $m \in \{n-3,n-2,n-1\}$. 
\item If $n$ is even, then $m \in \{n-3,n-2\}$.
\end{enumerate}
\end{lemm}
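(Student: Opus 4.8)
The plan is to translate the statement into a two-sided bound on $r=n-m$, the rank of $G=H/K$, and then bound $r$ using the hyperelliptic structure at one end and Corollary \ref{coro:m=n-1} at the other. By Remark \ref{observa}, the data $(R,G)$ is exactly $R=S/K$ with $G=H/K\cong{\mathbb Z}_{2}^{r}$ and $K\cong{\mathbb Z}_{2}^{m}$ acting freely, so $m+r=n$. Assertions (a) and (b) are then equivalent to the three statements: $r\leq 3$ always; $r\geq 1$ always; and $r\geq 2$ whenever $n$ is even. Here $m=n-3$ corresponds to $r=3$ and $m=n-1$ to $r=1$.

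First I would establish the lower bound $m\geq n-3$, i.e. $r\leq 3$, which is the heart of the matter. Since $R$ is hyperelliptic (so $g_{R}\geq 2$), it carries a unique hyperelliptic involution $\iota$; uniqueness forces $\iota$ to be central in ${\rm Aut}(R)$, because any automorphism conjugates $\iota$ to a hyperelliptic involution, hence to $\iota$ itself. Therefore $\langle G,\iota\rangle$ is again an elementary abelian $2$-group containing $G$. I would then look at the degree-two quotient $R\to R/\langle\iota\rangle\cong\widehat{\mathbb C}$ and show that the induced action of $\langle G,\iota\rangle$ on $R/\langle\iota\rangle$ has kernel exactly $\langle\iota\rangle$: an automorphism $g$ descending to the identity satisfies $g(x)\in\{x,\iota(x)\}$ for every $x$, and since each of $\{x:g(x)=x\}$ and $\{x:g(x)=\iota(x)\}$ is either discrete or all of the connected surface $R$, one of them must be all of $R$, so $g\in\{{\rm id},\iota\}$. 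Consequently $\langle G,\iota\rangle/\langle\iota\rangle$ embeds as a finite elementary abelian $2$-subgroup of ${\rm PSL}_{2}({\mathbb C})$. By the classification of finite subgroups of ${\rm PSL}_{2}({\mathbb C})$ (cyclic, dihedral, $A_{4}$, $S_{4}$, $A_{5}$), every such subgroup has rank at most $2$. Since $\langle G,\iota\rangle/\langle\iota\rangle$ has rank $r-1$ (if $\iota\in G$) or $r$ (if $\iota\notin G$), and this rank is at most $2$, we get $r\leq 3$ in both cases.

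Next I would record the remaining bounds. If $r=0$ then $R=S$ would be hyperelliptic, contradicting the non-hyperellipticity of generalized Fermat curves recalled in the introduction; hence $r\geq 1$, that is $m\leq n-1$. Combined with $m\geq n-3$ this yields $m\in\{n-3,n-2,n-1\}$, proving (a). For (b), when $n$ is even the extremal value $m=n-1$, i.e. $K\cong{\mathbb Z}_{2}^{n-1}$, is excluded directly by Corollary \ref{coro:m=n-1}, which states that a freely acting $K\cong{\mathbb Z}_{2}^{n-1}$ forces $n$ odd; hence $m\in\{n-3,n-2\}$, proving (b).

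I expect the main obstacle to be the clean execution of the second paragraph, namely verifying that the hyperelliptic involution is genuinely central and that the induced $\langle G,\iota\rangle$-action on $\widehat{\mathbb C}$ has kernel precisely $\langle\iota\rangle$, so that the Möbius rank bound transfers to a bound on $r$. Everything else --- the bookkeeping $m=n-r$, the exclusion of $r=0$ via non-hyperellipticity of $S$, and the parity exclusion via Corollary \ref{coro:m=n-1} --- is routine once this structural point is in place.
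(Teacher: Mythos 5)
Your proof is correct in substance and follows the same route as the paper: the key bound $r\le 3$ is obtained, exactly as in the paper's proof, by passing to the induced action of $\langle G,\iota\rangle/\langle\iota\rangle$ on $R/\langle\iota\rangle\cong\widehat{\mathbb C}$ and invoking that the only abelian subgroups of ${\rm PSL}_{2}({\mathbb C})$ are cyclic or ${\mathbb Z}_{2}^{2}$ (your verification that the kernel of the induced action is exactly $\langle\iota\rangle$ is in fact more careful than the paper, which asserts this without comment). One small slip: $r=0$ corresponds to $K=H$ and $R=S/H\cong\widehat{\mathbb C}$, not to $R=S$ (that is the case $m=0$, $r=n$); the exclusion of $r=0$ should instead come from the fact that $H$ does not act freely (its standard generators have fixed points), or equivalently that $R=R/G$ cannot have genus zero --- the conclusion $r\ge 1$ is of course still trivially true. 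Your appeal to Corollary \ref{coro:m=n-1} for the parity exclusion when $n$ is even is equivalent to the paper's direct Riemann--Hurwitz parity argument.
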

\begin{proof}
As $H \cong {\mathbb Z}_{2}^{n}$ acts with fixed points, $m \in \{1,...,n-1\}$. Note that, for $n \geq 4$ even, $H$ has no subgroup isomorphic to ${\mathbb Z}_{2}^{n-1}$ acting freely on $S$ (otherwise, there will be a closed Riemann surface admitting a conformal involution with an odd number of fixed points, a contradiction to the Riemann-Hurwitz formula).
In particular, for $n \geq 4$ even, $m \leq n-2$. 

The group $H/K \cong {\mathbb Z}_{2}^{n-m}$ is a group of conformal automorphisms of the hyperelliptic Riemann surface $R$. If $\iota$ denotes its hyperelliptic involution, then either (i) $\iota \in H/K$ or (ii) $\iota \notin H/K$. In the first case, $H/K$ induces an action of
${\mathbb Z}_{2}^{n-m-1}$ as a group of M\"obius transformations. In the second case, $H/K$ induces an action of
${\mathbb Z}_{2}^{n-m}$ as a group of M\"obius transformations.
As the only Abelian subgroups of ${\rm PSL}_{2}({\mathbb C})$ are cyclic or ${\mathbb Z}_{2}^{2}$, the above asserts $m \in \{n-3,n-2,n-1\}$.
\end{proof}

\begin{rema}
Let $(S,H)$ be a generalized Fermat pair of type $(2,n)$. If $n \geq 5$ is odd, then there is exactly one subgroup $K \cong {\mathbb Z}_{2}^{n-1}$ of $H$ acting freely on $S$ and, by Corollary \ref{coro:m=n-1}, $S/K$ is hyperelliptic.
If $n \geq 4$, then subgroups of $H$ isomorphic to ${\mathbb Z}_{2}^{n-2}$ or ${\mathbb Z}_{2}^{n-3}$ and acting freely on $S$ may not be unique and they may not necessarily produce hyperelliptic quotients as can be seen in the following example. 
\end{rema}

\begin{exem}[Example with non-hyperelliptic quotients]
Next, for $n=7$, we construct a subgroup $K \cong {\mathbb Z}_{2}^{n-2}$, acting freely on $S$, such that $S/K$ is non-hyperelliptic (similar examples for $K \cong {\mathbb Z}_{2}^{n-3}$ can be produced). Let $a_{1},\ldots,a_{8} \in H$ be a standard set of generators. Consider the surjective homomorphism $\rho:H \to G=\langle u,v: u^{2}=v^{2}=(uv)^{2}=1\rangle \cong {\mathbb Z}_{2}^{2}$ defined by $\rho(a_{i})=u$ and $\rho(a_{j})=v$, where $i \in \{1,2,3,4\}$ and $j \in \{5,6,7,8\}$. This corresponds to $P=(I_{1}=\{1,2,3,4\},I_{2}=\{5,6,7,8\}, I_{3}=\varnothing) \in {\mathcal F}_{2,2}^{7}$, where $u_{1}=u$, $u_{2}=v$ and $u_{3}=uv$, and $\rho=\rho_{7,P}$.
The kernel $K$ of $\rho$ acts freely on $S$ and $R=S/K$ is a closed Riemann surface of genus five on which the group $G$ acts as a group of conformal automorphisms with $S/H=R/G$. The involutions $u$ and $v$ have, each one, exactly $8$ fixed points and the involution $uv$ acts freely on $S$. We claim that $R$ is non-hyperelliptic. In fact, if $R$ is hyperelliptic, then its  hyperelliptic involution $\iota$ has $12$ fixed points, so $\iota \notin G$. By projecting $\iota$ to the quotient orbifold $R/G$, we see that the induced involution must have exactly two fixed points and the $8$ cone points are permuted into $4$ pairs. It follows that $R/\langle G,\iota\rangle$ has genus zero with exactly $6$ cone points of order two; exactly $4$ of them being the projection of the fixed points of the elements of $G$. Now, by projecting on the orbifold $R/\langle \iota \rangle$, the group $G$ induces an isomorphic M\"obius group that permutes the $12$ cone points (and fixing no one of them). This asserts that $R/\langle G, \iota \rangle$ must be of genus zero and with exactly $8$ cone points; where $3$ of them are the projections of the fixed points of $G$; a contradiction with the previous. 
\end{exem}

The following result, which will be needed it in the proof of Theorem \ref{main}, concerns with 
subgroups of $H$ isomorphic to ${\mathbb Z}_{2}^{n-2}$ and acting freely on $S$.

\begin{lemm}\label{teo5}
Let $(S,H)$ be a generalized Fermat pair of type $(2,n)$, where $n \geq 4$. If $K_{1}$ and $K_{2}$ are two subgroups of $H$, both isomorphic to ${\mathbb Z}_{2}^{n-2}$ and acting freely on $S$, then both pairs $(S/K_{1},H/K_{1})$ and $(S/K_{2},H/K_{2})$ are conformally equivalent if and only if there is some $f \in {\rm Aut}(S)$ so that $f K_{1} f^{-1}=K_{2}$.
\end{lemm}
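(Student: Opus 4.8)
The plan is to prove both implications by passing to the Fuchsian uniformization of $S$, exploiting two structural facts: $H$ is the \emph{unique} generalized Fermat group of $S$ (hence normal in $\mathrm{Aut}(S)$), and the derived subgroup $\Gamma'_{2,n}$ uniformizing $S$ is \emph{characteristic} in the Fuchsian group $\Gamma_{2,n}$ uniformizing $\mathcal{O}=S/H$.

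First, the forward implication. Suppose $f\in\mathrm{Aut}(S)$ satisfies $fK_{1}f^{-1}=K_{2}$. Since $H$ is normal in $\mathrm{Aut}(S)$, we have $fHf^{-1}=H$. Then $f$ sends $K_{1}$-orbits to $K_{2}$-orbits and descends to a biholomorphism $\bar{f}:S/K_{1}\to S/K_{2}$. Writing $\pi_{i}:S\to S/K_{i}$ for the quotient maps, one checks $\bar{f}\circ(hK_{1})=(fhf^{-1}K_{2})\circ\bar{f}$ for each $h\in H$, with $fhf^{-1}\in H$; hence $\bar{f}$ conjugates $H/K_{1}$ onto $H/K_{2}$, so the two pairs are conformally equivalent.

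For the converse, I would use the Fuchsian model of Subsection~\ref{Fuchsiansetting}: let $\Gamma=\Gamma_{2,n}$ uniformize $\mathcal{O}=S/H$, with $S=\mathbb{H}^{2}/\Gamma'$ and $H=\Gamma/\Gamma'$. Each $K_{i}$ corresponds to an intermediate group $\Gamma'<\Gamma_{K_{i}}<\Gamma$ with $\Gamma_{K_{i}}/\Gamma'=K_{i}$, so that $R_{i}=S/K_{i}=\mathbb{H}^{2}/\Gamma_{K_{i}}$ and $G_{i}=H/K_{i}=\Gamma/\Gamma_{K_{i}}$. A conformal equivalence of pairs is a biholomorphism $F:R_{1}\to R_{2}$ conjugating $G_{1}$ onto $G_{2}$; it therefore carries $G_{1}$-orbits to $G_{2}$-orbits and descends to an orbifold automorphism of $\mathcal{O}=R_{i}/G_{i}$. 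Lifting $F$ to the universal cover gives $\widehat{F}\in\mathrm{Aut}(\mathbb{H}^{2})$ with $\widehat{F}\,\Gamma_{K_{1}}\widehat{F}^{-1}=\Gamma_{K_{2}}$, and because $F$ covers an automorphism of $\mathbb{H}^{2}/\Gamma$, the lift $\widehat{F}$ normalizes $\Gamma$. Since $\Gamma'$ is characteristic in $\Gamma$, $\widehat{F}$ also normalizes $\Gamma'$ and thus descends to an automorphism $f$ of $S=\mathbb{H}^{2}/\Gamma'$. Reducing the relation $\widehat{F}\,\Gamma_{K_{1}}\widehat{F}^{-1}=\Gamma_{K_{2}}$ modulo $\Gamma'$ yields $fK_{1}f^{-1}=K_{2}$, completing the argument.

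The main obstacle I anticipate is the passage from the abstract equivalence $F$ to a lift living \emph{over} $S$ and conjugating $K_{1}$ to $K_{2}$; the two points that make this work and deserve care are (a) that $F$ genuinely descends to $\mathcal{O}$, so its lift lies in the normalizer of $\Gamma$ rather than merely conjugating $\Gamma_{K_{1}}$ to $\Gamma_{K_{2}}$, and (b) that the characteristic property of $\Gamma'$ lets that lift descend to $S$. Equivalently, one may phrase the same argument using the surjectivity of $\theta:\mathrm{Aut}(S)\to M$ established earlier: the orbifold automorphism induced by $F$ is realized by some $m\in M$, which lifts to some $f\in\mathrm{Aut}(S)$; since $H$ is abelian, the subgroup $fK_{1}f^{-1}$ is independent of the chosen lift (any two lifts differ by an element of $H$, which centralizes the normal subgroup $K_{1}$), while covering theory identifies it with $K_{2}$.
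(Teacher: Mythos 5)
Your proof is correct and follows essentially the same route as the paper: the forward direction via normality of $H$ in ${\rm Aut}(S)$, and the converse by descending the equivalence to an orbifold automorphism of $S/H$ and lifting it through the homology cover (i.e., using that $\Gamma'_{2,n}$ is characteristic in $\Gamma_{2,n}$). Your version merely makes explicit two points the paper leaves implicit — that the lift of the induced orbifold automorphism normalizes $\Gamma$, and that the resulting $f$ is well defined up to $H$, which does not affect $fK_{1}f^{-1}$ since $H$ is abelian.
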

\begin{proof} One direction is clear by the uniqueness of $H$. On the other direction, 
if there is a conformal homeomorphism $\phi:S/K_{1} \to S/K_{2}$ so that $\phi (H/K_{1}) \phi^{-1} = H/K_{2}$, then $\phi$ induces a conformal automorphism $\psi$ of $S/H=(S/K_{j})/(H/K_{j})$. As $S$ is the homology cover of $S/H$, this means that $\psi$ lifts to a conformal automorphism $f \in {\rm Aut}(S)$ that conjugates $K_{1}$ to $K_{2}$.
\end{proof}

\begin{rema}
In the generic situation one has that ${\rm Aut}(S)=H$ is abelian (recall that ${\rm Aut}(S)/H$ is isomorphic to the group $M$ of M\"obius transformations keeping invariant the set of conical points of $S/H$). In that generic case, for $n \geq 4$, the above result asserts that if  $K_{1}$ and $K_{2}$ are two different subgroups of $H$, both isomorphic to ${\mathbb Z}_{2}^{n-2}$ and acting freely on $S$, then $(S/K_{1},H/K_{1})$ and $(S/K_{2},H/K_{2})$ are not conformally equivalent.
\end{rema}

%%%%%%%%%%%%%%%%%%%%
%%%%%%%%%%%%%%%%%%%%%%%
\section{Proof of Theorem \ref{explicito1}}\label{Sec:Prueba}
Let $S=C^{p}(\lambda_{1},\ldots,\lambda_{n-2})$ (or $C^{p}(\varnothing)$ for $n=2$), where $p \geq 2$ is a prime integer such that $(p-1)(n-1)>2$,
$H=H_{0}=\langle a_{1},\ldots,a_{n}\rangle$ be its generalized Fermat group of type $(p,n)$ (as in Section \ref{Sec:standard}) and  $\pi=\pi_{0}:S \to \widehat{\mathbb C}$ be the 
Galois (branched) covering with deck group $H$ and whose branch values are the points 
$p_{1}=\infty, p_{2}=0, p_{3}=1, p_{4}=\lambda_{1},\ldots, p_{n+1}=\lambda_{n-2}$, where $p_{j}$ is the $\pi$-projection of the fixed points of the standard generator $a_{j}$ (where $a_{n+1}=(a_{1}\cdots a_{n})^{-1}$).

Let $K \neq \{I\}$ be a subgroup  of $H$ acting freely on $S$ such that $R=S/K$ is hyperelliptic. We denote by $\iota$ the hyperelliptic involution of $R$.
Then $K \cong {\mathbb Z}_{p}^{m}$, for some $m \in \{1,\ldots,n-1\}$, where $r=n-m \in A_{n,p}$ (Proposition \ref{lema1}). The group $G=H/K \cong {\mathbb Z}_{p}^{r}$ is a group of conformal automorphisms of $R$ (note that $\iota$ might or not belong to $G$).
If $P:R \to \widehat{\mathbb C}$ is a Galois (branched) covering with deck group $\langle \iota \rangle$, then an algebraic curve representation of $R$ is provided by the curve
$$y^{2}=\prod_{j=1}^{2g+2}(x-q_{j}),$$
where $q_{1},\ldots,q_{2g+2}$ are the branch values of $P$ (if some of them equals to $\infty$, then we delete the corresponding factor $(x-q_{j})$).

Let us consider: (i) a Galois covering $F:S \to R$, with deck group $K$, and (ii) a Galois (branched) covering $\widehat{\pi}:R \to \widehat{\mathbb C}$, with deck group $G$, such that $\pi=\widehat{\pi} \circ F$. 

The Galois (branched) covering $R \to \widehat{\mathbb C}$, with deck group $\langle G, \iota\rangle$, factors through $P$ and 
$\widehat{\pi}$, that is, there is a Galois (branched) covering $Q^{*}:\widehat{\mathbb C} \to \widehat{\mathbb C}$, with deck group $\langle G, \iota \rangle/G$ (so, if $\iota \in G$, then we may assume $Q^{*}$ to be the identity map) and there is a Galois (branched) covering $Q:\widehat{\mathbb C} \to \widehat{\mathbb C}$, with deck group $\langle G, \iota\rangle/\langle \iota \rangle$, such that $Q^{*} \circ \widehat{\pi}=Q \circ P$. To obtain the equations of $R$, we need to describe $Q$. We proceed to divide our study into two cases: (1) $p=2$ and (2) $p \geq 3$.

%%%%%%%%%%%%%%%
\subsection{Proof of parts (1), (2), (3) and (4): The case ${\bf p=2}$ (so ${\bf n \geq 4}$)}
By Lemma \ref{cocientes}, $r \in \{1,2,3\}$.
Let us consider a Galois (unbranched) covering $S \to R$, induced by the action of $K$. As $S$ has genus $1+2^{n-2}(n-3)$, the Riemann-Hurwitz formula asserts that $R$ has genus $g_{R}=1+2^{r-2}(n-3)$.

%%%%%%%%%%%%%%%%
\subsubsection{{\bf Part (1):} ${\bf r=1}$} 
In this case, $K\cong {\mathbb Z}_{2}^{n-1}$ and 
Corollary \ref{coro:m=n-1} asserts that $n \geq 5$ is odd and that 
$$K=\langle a_{1}a_{2},a_{1}a_{3},\ldots,a_{1}a_{n}\rangle, \quad  S/K= \; y^{2}=x(x-1)(x-\lambda_{1})\cdots(x-\lambda_{n-2}).$$ 

%%%%%%%%%%%%%%%%
\subsubsection{{\bf Parts (2) and (3):} ${\bf r=2}$}
In this case, $K\cong {\mathbb Z}_{2}^{n-2}$, $R$ has genus $n-2$ and  
$G=\{1,u_{1}=u, u_{2}=v, u_{3}=uv\}=\langle u, v: u^{2}=v^{2}=(uv)^{2}=1\rangle$. By Proposition \ref{lema1}, there is some $P=(I_{1},I_{2},I_{3}) \in {\mathcal F}_{2,2}^{n}$ such that $K$ is the kernel  of a homomorphism $\rho_{2}:H \to G$ defined by  
$$\rho_{2}(a_{j})=\left\{\begin{array}{ll}
u, & j \in I_{1},\\
v, & j \in I_{2},\\
uv, & j \in I_{3}.
\end{array}
\right.
$$

Since $1=u^{\#I_{1}}v^{\#I_{2}}(uv)^{\#I_{3}}=u^{\#I_{1}+\#I_{3}}v^{\#I_{2}+\#I_{3}}$, that is, 
the sets $I_{1}, I_{2}, I_{3}$ have cardinalities of the same parity (i.e., all of them odd or all of them even).
The group $G\cong {\mathbb Z}_{2}^{2}$ acts as a group of conformal automorphisms of $R$ such that $R/G=S/H$ is the Riemann sphere with exactly $n+1$ cone points of order two. The automorphism $u$ has $2\#I_{1}$ fixed points, $v$ has $2\#I_{2}$ fixed points and $uv$ has $2\#I_{3}$ fixed points.  Also, $\#I_{1}+\#I_{2}+\#I_{3}=n+1$. 
We have two possibilities to consider: (i) $\iota \in G$ or (ii) $\iota \notin G$.

\subsubsection*{\bf (i) Case ${\bf \iota \in G}$}
One of the elements of $G$ must be $\iota$, so one of $I_{j}$ must have cardinality $n-1$ (as $R$ has genus $n-2$). 
As  $\#I_{1}+\#I_{2}+\#I_{3}=n+1$ and the three have the same parity, we have the following.
 \begin{enumerate}[label=(\alph*),leftmargin=*,align=left]
 
\item[(a)] If $n \geq 4$ is even, then either: (i) $\#I_{1}=n-1$ and $\#I_{2}=\#I_{3}=1$ or (ii) $\#I_{2}=n-1$ and $\#I_{1}=\#I_{3}=1$ or (iii) $\#I_{3}=n-1$ and $\#I_{1}=\#I_{2}=1$.
\item[(b)] If $n \geq 5$ is odd, then either: (i) $\#I_{1}=n-1$, $\#I_{2}=2$ and $\#I_{3}=0$ or (ii) $\#I_{1}=n-1$, $\#I_{2}=0$ and $\#I_{3}=2$ or
(iii) $\#I_{2}=n-1$, $\#I_{1}=2$ and $\#I_{3}=0$ or (iv) $\#I_{2}=n-1$, $\#I_{1}=0$ and $\#I_{3}=2$ or (v) $\#I_{3}=n-1$, $\#I_{1}=2$ and $\#I_{2}=0$ or (vi) $\#I_{3}=n-1$, $\#I_{1}=0$ and $\#I_{2}=2$.
\end{enumerate}

The above permits to see that the corresponding collection of subgroups $K$ of $H$ (a collection of cardinality $n(n+1)/2$) are of the form
$$K_{P}=K(\{i_{1}, \ldots,i_{n-1}\})=\langle a_{i_{1}} a_{i_{2}}, \ldots, a_{i_{1}} a_{i_{n-1}}\rangle \cong {\mathbb Z}_{2}^{n-2},$$
where 
$\{i_{1}, \ldots, i_{n-1}\} \subset \{1,\ldots,n+1\}$ corresponds to the component set $I_{j}$ of $P$ of cardinality $n-1$. 
Let $b_{1}=p_{i_{n}}$ and $b_{2}=p_{i_{n+1}}$.

In this case, without lost of generality, we may assume $G/\langle \iota \rangle=\langle z \mapsto -z\rangle$. So, the (branched) covering $Q:\widehat{\mathbb C} \to \widehat{\mathbb C}$ has its critical values at $0$ and $\infty$ and it satisfies that $Q(-z)=Q(z)$ and $\{Q(\infty),Q(0)\}=\{b_{1},b_{2}\}$.
Then the set of Weierstrass points of $S/K(\{i_{1}, \ldots, i_{n-1}\})$ is given by the set 
$$\{\mu_{1},\ldots,\mu_{2(n-1)}\}=Q^{-1}(\{p_{i_{1}},\ldots,p_{i_{n-1}}\}),$$ an explicit equation for it is given by
{\small
$$S/K(\{i_{1}, \ldots, i_{n-1}\})= \; y^{2}=\prod_{j=1}^{2(n-1)}(x-\mu_{j}).$$
}

For example, we may take
$$
Q(z)=\left\{ \begin{array}{ll}
(b_{1}-b_{2}z^{2})/(1-z^{2}), & \mbox{ if $\infty \notin \{b_{1},b_{2}\}$}\\
z^{2}+b_{2}, & \mbox{if $b_{1}=\infty$}\\
b_{1}-z^{2}, & \mbox{if $b_{2}=\infty$}
\end{array}
\right\}.
$$

\subsubsection*{\bf (ii) Case ${\bf \iota \notin G}$} We will observe that $n=5$.
As $\iota \notin G$, then $\iota$ does not share a fixed point with any involution of $G$. By 
projecting $\iota$ to $R/G$, we obtain a conformal involution $\widehat{\iota}$ that permutes in pairs the $n+1$ cone points $\{\infty,0,1,\lambda_{1},\ldots, \lambda_{n-2}\}$ and fixes none of them. It follows that $n$ is odd. Up to a M\"obius transformation, we may assume that $\widehat{\iota}$ permutes $\infty$ with $0$, $1$ with $\lambda_{1}$ (so $\widehat{\iota}(x)=\lambda_{1}/x$) and $\lambda_{2j}$ with $\lambda_{2j+1}$, for $j=1,\ldots, (n-3)/2$. This asserts that on the genus zero quotient $R/\langle G, \iota \rangle$ we have only two cone points coming from the fixed points of $\iota$ and the others $(n+1)/2$ from the fixed points of $G$.
We may now consider a Galois branched cover of degree two induced by $\iota$, say $T:R \to \widehat{\mathbb C}$, so that $G$ induces, under $T$, the group $\widehat{G}=\langle A(x)=-x, B(x)=1/x\rangle$. The $2n-2$ branch values of $T$ are permuted by $\widehat{G}$, none of them being fixed by an involution on it. In particular, the above asserts that the five cone points of the quotient orbifold $R/\langle \iota,G\rangle$ are divided into two disjoint sets; one of cardinality three which are the ones coming from the fixed points of the involutions of $G$ and the other of cardinality $(2n-2)/4$ which are coming from the fixed points of $\iota$.
Since $2=(2n-2)/4$ and $(n+1)/2=3$, it holds that $n=5$. All the above asserts that $S$ has genus $17$,
that $R$ has genus three and that $G \cong {\mathbb Z}_{2}^{2}$ acts on $R$ with quotient orbifold $R/G$ being the sphere with exactly $6$ cone points, each one of order two, given by $\infty$, $0$, $1$, $\lambda_{1}$, $\lambda_{2}$ and $\lambda_{3}=\lambda_{1}/\lambda_{2}$. 

It is known that a hyperelliptic Riemann surface of genus three admitting a group of conformal automorphisms $G \cong {\mathbb Z}_{2}^{2}$ (which does not contains the hyperelliptic involution) can be described by a curve of the form
$$y^{2}=(x^{2}-a^{2})(x^{2}-a^{-2})(x^{2}-b^{2})(x^{2}-b^{-2}),$$
for suitable values of $a$ and $b$ such that $a^{2},b^{2} \in {\mathbb C}\setminus \{0,1,-1\}$. In this model, the group $G$ is generated by $u(x,y)=(-x,y)$ and $v(x,y)=(1/x,y/x^{4})\rangle$.
In order to get the values of these parameters $a$ and $b$, in terms of the values $\lambda_{1}$ and $\lambda_{2}$, we may proceed as follows. As before, we assume that the involution $\widehat{\iota}$, which is induced by $\iota$ on the orbifold $R/G$, is given by $\widehat{\iota}(x)=\lambda_{1}/x$. The map $L(x)=x+\lambda_{1}/x$ provides a two-fold branched covering with deck group $\langle \; \widehat{\iota} \; \rangle$. The cone values of $R/\langle G, \iota \rangle$ are provided by the two branch values of $L$ (these are given by the points $\pm 2\sqrt{\lambda_{1}}$) and the image of the values $\infty, 0, 1, \lambda_{1}, \lambda_{2}$ and $\lambda_{3}$ (these are given by the points $\infty$, $1+\lambda_{1}$ and $\lambda_{2}+\lambda_{1}/\lambda_{2}$). A Galois branched covering, with deck group $\widehat{G}$, is given by $Q_{1}(x)=x^{2}+x^{-2}$. The three branch values of $Q_{1}$ are given by the points $\pm 2$ and $\infty$, and it sends the values $\pm a$, $\pm a^{-1}$, $\pm b$ and $\pm b^{-1}$ to $a^{2}+a^{-2}$ and $b^{2}+b^{-2}$. If we consider the M\"obius transformation $M(x)=\alpha x +\beta$, where
$\alpha=\frac{1}{4}(1+\lambda_{1}-\lambda_{2}-\lambda_{3})$ and $\beta=1+\lambda_{1}+\lambda_{2}+\lambda_{3}$, then 
$$Q_{2}(x)=M \circ Q_{1}(x)=\frac{1}{4}(1+\lambda_{1}-\lambda_{2}-\lambda_{3})(x^{2}+x^{-2})+\frac{1}{2}(1+\lambda_{1}+\lambda_{2}+\lambda_{3})$$ 
provides a degree four Galois branched covering, with deck group $\widehat{G}$, whose three branch values are $\infty$, $1+\lambda_{1}$ and $\lambda_{2}+\lambda_{1}/\lambda_{2}$.  It sends the points $\pm a$, $\pm a^{-1}$, $\pm b$ and $\pm b^{-1}$ to $\pm 2\sqrt{\lambda_{1}}$.
In this way, the choice of the values of $a$ and $b$ are given by the property that 
$$M(a^{2}+a^{-2})=2\sqrt{\lambda_{1}}, \; M(b^{2}+b^{-2})=-2\sqrt{\lambda_{1}}.$$

This situation corresponds to $I_{1}=\{i_{1},i_{2}\}$, $I_{2}=\{i_{3},i_{4}\}$, $I_{3}=\{i_{5},i_{6}\}$ and the group $K=\langle a_{i_{1}}a_{i_{2}}, a_{i_{3}}a_{i_{4}},a_{i_{1}}a_{i_{3}}a_{i_{5}} \rangle \cong {\mathbb Z}_{2}^{3}$, 
where $\{1,2,3,4,5,6\}=\{i_{1},i_{2},i_{3},i_{4},i_{5},i_{6}\}$.

%%%%%%%%%%%%%%%%
\subsubsection{{\bf Part (4):} ${\bf r=3}$}
In this case, $K\cong {\mathbb Z}_{2}^{n-3}$, $R$ has genus $2n-5$ and 
$G=\{1,u_{1}=u, u_{2}=v, u_{3}=w, u_{4}=uv, u_{5}=vw, u_{6}=uw, u_{7}=uvw\}=\langle u,v,w: u^{2}=v^{2}=w^{2}=(uv)^{2}=(uw)^{2}=(vw)^{2}=1\rangle$. 
By Proposition \ref{lema1},  there is some $P=(I_{1},\ldots,I_{7}) \in {\mathcal F}_{3,2}^{n}$ such that $K$ is the kernel  of a homomorphism $\rho_{3}:H \to G$ defined by  
$$\rho_{3}(a_{j})=\left\{\begin{array}{ll}
u, & j \in I_{1},\\
v, & j \in I_{2},\\
w, & j \in I_{3},\\
uv, & j \in I_{4},\\
vw, & j \in I_{5},\\
uw, & j \in I_{6},\\
uvw, & j \in I_{7}.
\end{array}
\right.
$$

Since $1=u^{\#I_{1}} v^{\#I_{2}} w^{\#I_{3}} (uv)^{\#I_{4}} (vw)^{\#I_{5}} (uw)^{\#I_{6}} (uvw)^{\#I_{7}}=u^{\#I_{1}+\#I_{4}+\#I_{6}+\#I_{7}}
v^{\#I_{2}+\#I_{4}+\#I_{5}+\#I_{7}} w^{\#I_{3}+\#I_{5}+\#I_{6}+\#I_{7}}$, 
we must have that
$$(*)\left\{ \begin{array}{c}
\#I_{1}+\#I_{4}+\#I_{6}+\#I_{7} \equiv 0 \mod(2),\\
\#I_{2}+\#I_{4}+\#I_{5}+\#I_{7} \equiv 0 \mod(2),\\
\#I_{3}+\#I_{5}+\#I_{6}+\#I_{7} \equiv 0 \mod(2),
\end{array}
\right.
$$

In this case, the group $G=H/K \cong {\mathbb Z}_{2}^{3}$ acts as a group of conformal automorphisms of $R$ such that $R/G=S/H$. The involutions $u,v,w,uv,vw,uw,uvw$ have, respectively, $4\#I_{1},4\#I_{2},4\#I_{3},4\#I_{4},4\#I_{5},4\#I_{6},4\#I_{7}$ fixed points.  
We claim that its hyperelliptic involution $\iota$ must belong to $G$. In fact, otherwise $G \cong {\mathbb Z}_{2}^{3}$ must induce an isomorphic group of M\"obius transformations on the quotient $R/\langle \iota \rangle$, a contradiction to the fact that the only finite abelian subgroups of ${\rm PSL}_{2}({\mathbb C})$ are the cyclic ones and ${\mathbb Z}_{2}^{2}$. 
We may assume that $u=\iota$, that is, $\#I_{1}=n-2$; so $\#I_{2}+\#I_{3}+\#I_{4}+\#I_{5}+\#I_{6}+\#I_{7}=3$. It follows from this and  $(*)$ that 
$$\begin{array}{ll}
(a) & n-2+\#I_{4}+\#I_{6}+\#I_{7} \equiv 0 \mod(2),\\
(b) & \#I_{2}+\#I_{4}+\#I_{5}+\#I_{7}\in \{0,2\}\\
(c) & \#I_{3}+\#I_{5}+\#I_{6}+\#I_{7} \in \{0,2\}.
\end{array}
$$

If $\#I_{2}+\#I_{4}+\#I_{5}+\#I_{7} =0$, then (from (c)) $\#I_{3}+\#I_{6}\in \{0,2\}$, which contradicts the fact that  $\#I_{2}+\#I_{3}+\#I_{4}+\#I_{5}+\#I_{6}+\#I_{7}=3$. Similarly, 
if $\#I_{3}+\#I_{5}+\#I_{6}+\#I_{7} =0$, then it again provides a contradiction. In this way,
$$\begin{array}{ll}
(i) & n-2+\#I_{4}+\#I_{6}+\#I_{7} \equiv 0 \mod(2),\\
(ii) & \#I_{2}+\#I_{3}+\#I_{4}+\#I_{5}+\#I_{6}+\#I_{7}=3,\\
(iii) & \#I_{2}+\#I_{4}+\#I_{5}+\#I_{7} =2,\\
(iv) & \#I_{3}+\#I_{5}+\#I_{6}+\#I_{7} =2.\\
\end{array}
$$

It follows, by combining (ii) and (iii) and then (ii) with (iv), that $\#I_{3}+\#I_{6}=1=\#I_{2}+\#I_{4}$.
Then, by (ii) one also has that $\#I_{5}+\#I_{7}=1$ and, by (i) that  $n-2+\#I_{4}+\#I_{6}+\#I_{7}$ is even. As (by (ii)) 
$\#I_{4}+\#I_{6}+\#I_{7} \in \{0,1,2,3\}$, we observe that, 
for $n$ even, $\#I_{4}+\#I_{6}+\#I_{7}\in \{0,2\}$ and, for $n$ odd, $\#I_{4}+\#I_{6}+\#I_{7} \in \{1,3\}$.

Summarizing all the above:
\begin{enumerate}[label=(\alph*),leftmargin=*,align=left]

\item If $n \geq 4$ is even, then $\#I_{1}=n-2$, and either:
\begin{enumerate}
\item $\#I_{4}=\#I_{6}=\#I_{7}=0$ and $\#I_{2}=\#I_{3}=\#I_{5}=1$.
\item $\#I_{4}=\#I_{3}=\#I_{5}=0$ and $\#I_{2}=\#I_{6}=\#I_{7}=1$.
\item $\#I_{2}=\#I_{5}=\#I_{6}=0$ and $\#I_{3}=\#I_{4}=\#I_{7}=1$.
\item $\#I_{2}=\#I_{3}=\#I_{7}=0$ and $\#I_{4}=\#I_{5}=\#I_{6}=1$.

\end{enumerate}

\item If $n \geq 5$ is odd, then $\#I_{1}=n-2$, and either:
\begin{enumerate}
\item  $\#I_{2}=\#I_{6}=\#I_{7}=0$ and $\#I_{4}=\#I_{3}=\#I_{5}=1$.
\item  $\#I_{3}=\#I_{4}=\#I_{7}=0$ and $\#I_{2}=\#I_{5}=\#I_{6}=1$.
\item $\#I_{4}=\#I_{5}=\#I_{6}=0$ and $\#I_{2}=\#I_{3}=\#I_{7}=1$. 
\item  $\#I_{2}=\#I_{3}=\#I_{5}=0$ and $\#I_{4}=\#I_{6}=\#I_{7}=1$.
\end{enumerate}
\end{enumerate}

In each of the above cases (a)-(d), for either $n$ even or odd, we have $n(n^{2}-1)/6$ possible tuples $P=(I_{1},\ldots,I_{7}) \in {\mathcal F}_{3}^{n}$, and for each of them we have the corresponding group $K=K_{P}$. 

Let us assume  $I_{1}=\{i_{1},\ldots,i_{n-2}\}$. In this case, 
$K_{P}=\langle a_{i_{1}}a_{i_{2}},a_{i_{1}}a_{i_{3}},\ldots,a_{i_{1}}a_{i_{n-2}}\rangle$. Let $b_{1}=p_{i_{n-1}}, b_{2}=p_{i_{n}}$ and $b_{3}=p_{i_{n+1}}$.
In this case, without lost of generality, we may assume $G/\langle \iota \rangle=J=\langle z \mapsto -z, z \mapsto 1/z\rangle \cong {\mathbb Z}_{2}^{2}$. So, the Galois (branched) covering $Q$ 
has deck group $J$ whose branch values are $b_{1}, b_{2}$ and $b_{3}$.

Let $T$ be the (unique) M\"obius transformation such that $T(b_{1})=\infty$, $T(b_{2})=0$ and $T(b_{3})=1$, that is,
$$T(z)=\left\{ \begin{array}{ll}
(z-b_{2})(b_{3}-b_{1})/(z-b_{1})(b_{3}-b_{2}), & \mbox{if  $\infty \notin \{b_{1},b_{2},b_{3}\}$}\\
(z-b_{2})/(b_{3}-b_{2}), & \mbox{if  $\infty=b_{1}$}\\
(b_{3}-b_{1})/(z-b_{1}), & \mbox{if  $\infty = b_{2}$}\\
(z-b_{2})/(z-b_{1}), & \mbox{if  $\infty=b_{3}$}
\end{array}
\right\}.
$$

The map $U(z)=((1+z^{2})/2z)^{2}$ is a Galois branched covering with deck group $J=\langle z \mapsto -z, z \mapsto 1/z\rangle \cong {\mathbb Z}_{2}^{2}$ and whose branch values are $\infty$, $0$ and $1$. So, if $Q=T^{-1} \circ U$, then 
$Q:\widehat{\mathbb C} \to \widehat{\mathbb C}$ is a Galois branched cover, with deck group $J$, whose branch values are $b_{1}$, $b_{2}$ and $b_{3}$, as requiered.

The $Q$-preimage of the set $\{p_{i_{1}},\ldots,p_{i_{n-2}}\}$ consists of $4n-8$ points, a disjoint union of $n-2$ $J$-obits (the preimages of each $p_{i_{j}}$). These lifted points are the Weierstrass points of the hyperelliptic Riemann surface $S/K_{P}$.
As the four $Q$-preimages of a point $p_{i_{j}} \in \widehat{\mathbb C} \setminus \{b_{1},b_{2},b_{3}\}$ are the zerores of the polynomial $x^{4}+2(1-2q_{i_{j}})x^{2}+1$, where $q_{i_{j}}=T^{-1}(p_{i_{j}})$, an equation of $S/K_{P}$ has the form
{\small
$$S/K_{P}: \quad y^{2}=\prod_{j=1}^{n-2} (x^{4}+2(1-2q_{i_{j}})x^{2}+1).$$
}
%%%%%%%%%%%%%%%
\subsection{Proof of part (5) of Theorem \ref{explicito1}: The case $p \geq 3$}
As $\iota \notin G$, the group $G$ induces an abelian group $\widehat{G} \cong {\mathbb Z}_{p}^{r}$, of M\"obius transformations keeping invariant the projections of the fixed points of $\iota$ (these are $2(g_{p,n}+1)$ points). As the only finite abelian subgroups of ${\rm PSL}_{2}({\mathbb C})$ are either cyclic ones or the Klein group ${\mathbb Z}_{2}^{2}$, it follows that $r=1$.  Moreover, the cyclic group $G$ acts with exactly $n+1$ fixed points.
The action of $\widehat{G} \cong {\mathbb Z}_{p}$ on the orbifold $S/\langle \iota \rangle$ produces the orbifold $R/\langle G, \iota \rangle$ of signature either: (1) $(0;2,\stackrel{q}{\ldots},2,p,p)$ or (2) $(0;2,\stackrel{q}{\ldots},2,2p,2p)$ or (3) $(0;2,\stackrel{q}{\ldots},2,p,2p)$.
On the other hand, the involution $\iota$ induces a conformal involution of the orbifold $R/G=S/H$, and its action on $S/H$ produces the orbifold $R/\langle G, \iota \rangle$ with signature either:
(a) $(0;2,2,p,\stackrel{l}{\ldots},p)$ (so $n+1=2l$) or  (b) $(0;2p,2p,p,\stackrel{l}{\ldots},p)$ (so $n+1=2l+2$) or (c) $(0;2,2p,p,\stackrel{l}{\ldots},p)$ (so $n+1=2l+1$).
Combining all the above, we observe that the only possibilities are:
\begin{enumerate}
\item[(I)] case: (1) and (a) with $l=q=2$ (so $n=3$), or 
\item[(II)] case: (3) and (c) with $q=l=1$ (so $n=2$).
\end{enumerate}

In case (II), the surface $R$ is represented by the hyperelliptic curve as in part (5) (i)  of Theorem \ref{explicito1}. 

In case (I), it is represented by the hyperelliptic curve as described in part (5) (ii) of Theorem \ref{explicito1}. 
Note that the quotient orbifold $R/\langle \iota, A\rangle$ can be identified with $\widehat{\mathbb C}$ and its branch values to be: $0$ and $\infty$ (both of order $p$) and $1$ and $\alpha^{p}$ (both of order two). On the quotient orbifold $R/\langle A \rangle=S/H$ the involution $\iota$ induces a conformal involution $J$ permuting the branch values $\infty,0,1,\lambda_{1}$ in pairs. Up to the action of ${\mathbb G}_{3}$, we may assume $J(z)=\lambda_{1}/z$. A corresponding branched cover induced by $J$ is given by $Q_{3}(z)=(z^{2}-(\lambda_{1}+1)z+\lambda_{1})/(2\sqrt{\lambda_{1}}-\lambda_{1}-1)z)$. It sends the two fixed points of $J$ onto $1$ and $(2\sqrt{\lambda_{1}}+\lambda_{1}+1)/(\lambda_{1}+1-2\sqrt{\lambda_{1}})$ and the four cone points of order $p$ onto $\infty$ and $0$. This permits to obtain the desired relation between $\alpha$ and $\lambda_{1}$ as desired.

Working similarly as in the case $p=2$, if we write $G=\{1,u_{1}, u_{2}=u_{1}^{2}, \ldots, u_{p-1}=u_{1}^{p-1}\}$, then $K=K_{P}$ for some $P=(I_{1},\ldots,I_{p-1})$ such that
$\#I_{1}+\cdots+\#I_{p-1}=n+1$ and $\#I_{1}+2\#I_{2}+\cdots+(p-1)\#I_{p-1} \equiv 0 \mod (p)$. 

If $n=2$, we obtain $\#I_{1}=2$, $\#I_{p-2}=1$ and all others $I_{j}=\varnothing$. In this case,  $K=\langle a_{2}a_{1}^{-1}\rangle$. 

If $n=3$, we obtain $\#I_{1}=3$, $\#I_{p-3}=1$ and all others $I_{j}=\varnothing$. In this case, $K=\langle a_{2}a_{1}^{-1}, a_{3}a_{1}^{-1}\rangle$.

%%%%%%%%%%%%%%%%%
%%%%%%%%%%%%%%%%%
\section{Example: $(p,n)=(2,4)$ (classical Humbert curves)} \label{Ejemplo}
In this section, $S=C^{2}(\lambda_{1},\lambda_{2})$, and $H=H_{0}$, where $(\lambda_{1},\lambda_{2}) \in V_{4}$. Let 
$\{a_{1}, a_{2}, a_{3}, a_{4}, a_{5}\}$ be the set of standard generators of $H$ as in Section \ref{Sec:algebra}. In this case, $S$ has genus $g=5$ and (by Lemma \ref{cocientes})
the subgroups of $H$, acting freely and providing hyperelliptic quotients, are isomorphic to either ${\mathbb Z}_{2}$ or ${\mathbb Z}_{2}^{2}$.

%%%%%%%%%%%%%%
\subsection{}
The $10$ subgroups of $H$, isomorphic to ${\mathbb Z}_{2}$ and acting freely on $S$, are given by
{\small
$$\begin{array}{lllll}
L_{1}=\langle a_{1}a_{2}\rangle,& L_{2}=\langle a_{1}a_{3}\rangle,& L_{3}=\langle a_{1}a_{4}\rangle,& L_{4}=\langle a_{1}a_{5}\rangle,& L_{5}=\langle a_{2}a_{3}\rangle,\\
L_{6}=\langle a_{2}a_{4}\rangle, & L_{7}=\langle a_{2}a_{5}\rangle, & L_{8}=\langle a_{3}a_{4}\rangle, & L_{9}=\langle a_{3}a_{5}\rangle, & L_{10}=\langle a_{4}a_{5}\rangle.
\end{array}
$$
}

The $10$ hyperelliptic curves of genus three, provided by these $10$ subgroups, are given by
{\small
$$y^{2}=(x^{4}+2(1-2a)x^{2}+1)(x^{4}+2(1-2b)x^{2}+1),$$
}
where $(a,b)$ runs over the following pairs
{\small
$$\begin{array}{l}
(\lambda_{1},\lambda_{2}),(1-\lambda_{1},\lambda_{2}(1-\lambda_{1})/(\lambda_{2}-\lambda_{1})),(\lambda_{1}/(\lambda_{1}-1),(\lambda_{2}-\lambda_{1})/(1-\lambda_{1})), \\
(1/\lambda_{1},\lambda_{2}/\lambda_{1}),  (1-\lambda_{2},\lambda_{1}(1-\lambda_{2})/(\lambda_{1}-\lambda_{2}),(\lambda_{2}/(\lambda_{2}-1),(\lambda_{1}-\lambda_{2})/(1-\lambda_{2})),\\
(1/\lambda_{2},\lambda_{1}/\lambda_{2}),
((1-\lambda_{1})/(1-\lambda_{2}),\lambda_{2}(1-\lambda_{1})/(\lambda_{1}(1-\lambda_{2}))),
(\lambda_{2}/\lambda_{1},(1-\lambda_{2})/(1-\lambda_{1})), \\ (\lambda_{1}/\lambda_{2},\lambda_{1}(1-\lambda_{2})/(\lambda_{2}(1-\lambda_{1}))).
\end{array}
$$
}
%%%%%%%%%%%%%%
\subsection{}
The $10$ subgroups of $H$, isomorphic to ${\mathbb Z}_{2}^{2}$ and acting freely on $S$, are given by
%{\small
$$\begin{array}{c}
K_{1}=\langle a_{1}a_{2},a_{1}a_{3}\rangle,\; K_{2}=\langle a_{1}a_{2},a_{1}a_{4}\rangle,\; K_{3}=\langle a_{1}a_{2},a_{1}a_{5}\rangle,\; K_{4}=\langle a_{1}a_{3},a_{1}a_{4}\rangle,\\
K_{5}=\langle a_{1}a_{3},a_{1}a_{5}\rangle,\;K_{6}=\langle a_{1}a_{4},a_{1}a_{5}\rangle, \; K_{7}=\langle a_{2}a_{3},a_{2}a_{4}\rangle,\\
K_{8}=\langle a_{2}a_{3},a_{2}a_{5}\rangle, \; K_{9}=\langle a_{2}a_{4},a_{2}a_{5}\rangle, \; K_{10}=\langle a_{3}a_{4},a_{3}a_{5}\rangle.
\end{array}
$$
%}
In order to get algebraic curves descriptions, for the above corresponding $10$ Riemann surfaces of genus two, we proceed as follows. We consider the $10$ choices for $\{b_{1},b_{2}\}$: 
(i) $\{\infty,0\}$, (ii) $\{\infty,1\}$, (iii) $\{\infty,\lambda_{1}\}$, (iv) $\{\infty,\lambda_{2}\}$, (v) $\{0,1\}$, (vi) $\{0,\lambda_{1}\}$, (vii) $\{0,\lambda_{2}\}$, (viii) $\{1,\lambda_{1}\}$, (ix) $\{1,\lambda_{2}\}$, (x) $\{\lambda_{1},\lambda_{2}\}$. The choices for $Q(z)$ we may use in each case are: (i) $Q(z)=z^{2}$, (ii) $Q(z)=z^{2}+1$, (iii) $Q(z)=z^{2}+\lambda_{1}$,    (iv) $Q(z)=z^{2}+\lambda_{2}$,  (v) $Q(z)=1/(z^{2}+1)$, (vi) $Q(z)=\lambda_{1}/(z^{2}+1)$, (vii) $Q(z)=\lambda_{2}/(z^{2}+1)$, (viii) $Q(z)=(z^{2}+\lambda_{1})/(z^{2}+1)$,  (ix) $Q(z)=(z^{2}+\lambda_{2})/(z^{2}+1)$, (x) $Q(z)=(\lambda_{1}z^{2}+\lambda_{2})/(z^{2}+1)$. In this way, we obtain the $10$ desired hyperelliptic Riemann surfaces (in the first one, $C_{1}$, we have also changed $(x,y)$ by $(ix,iy)$):
{\fontsize{8}{8}\selectfont
$$
\begin{array}{l}
C_{1}: \; y^{2}=(x^{2}+1)(x^{2}+\lambda_{1})(x^{2}+\lambda_{2}),\;
C_{2}: \; y^{2}=(x^{2}+1)(x^{2}+1-\lambda_{1})(x^{2}+1-\lambda_{2}),\\
C_{3}: \; y^{2}=(x^{2}+\lambda_{1})\left(x^{2}-1+\lambda_{1}\right)\left(x^{2}-\lambda_{2}+\lambda_{1}\right),\;
C_{4}: \; y^{2}=(x^{2}+\lambda_{2})\left(x^{2}-1+\lambda_{2}\right)\left(x^{2}-\lambda_{1}+\lambda_{2}\right),\\
C_{5}: \; y^{2}=(x^{2}+1)\left(x^{2}+(\lambda_{1}-1)/\lambda_{1}\right)\left(x^{2}+(\lambda_{2}-1)/\lambda_{1}\right),\\
C_{6}: \; y^{2}=(x^{2}+1)\left(x^{2}+1-\lambda_{1}\right)\left(x^{2}+(\lambda_{2}-\lambda_{1})/\lambda_{2}\right),\;
C_{7}: \; y^{2}=(x^{2}+1)\left(x^{2}+1-\lambda_{2}\right)\left(x^{2}+(\lambda_{1}-\lambda_{2})/\lambda_{1}\right),\\
C_{8}: \; y^{2}=(x^{2}+1)(x^{2}+\lambda_{1})\left(x^{2}+(\lambda_{2}-\lambda_{1})/(1-\lambda_{2})\right),\\
C_{9}: \; y^{2}=(x^{2}+1)(x^{2}+\lambda_{2})\left(x^{2}+(\lambda_{1}-\lambda_{2})/(1-\lambda_{1})\right),\\
C_{10}: \; y^{2}=(x^{2}+1)\left(x^{2}+\lambda_{2}/\lambda_{1}\right)\left(x^{2}+(\lambda_{2}-1)/(\lambda_{1}-1)\right).
\end{array}
$$
}
Note that if we change $(x,y)$ by $\left(\sqrt{\lambda_{1}}x,\sqrt{\lambda_{1}^{3}}y\right)$, then $C_{3}$ is transformed into the curve
{\small
$$C'_{3}: \; y^{2}=(x^{2}+1)\left(x^{2}+(\lambda_{1}-1)/\lambda_{1}\right)\left(x^{2}+(\lambda_{1}-\lambda_{2})/\lambda_{1}\right),$$
}
and if we change $(x,y)$ by $\left(\sqrt{\lambda_{2}}x,\sqrt{\lambda_{2}^{3}}y\right)$, then $C_{4}$ is transformed into the curve
{\small
$$C'_{4}: \; y^{2}=(x^{2}+1)\left(x^{2}+(\lambda_{2}-1)/\lambda_{2}\right)\left(x^{2}+(\lambda_{2}-\lambda_{1})/\lambda_{2}\right).$$
}

%%%%%%%%%%%%%%%%%%
\subsection{}
Each subgroup $K_{j}$ contains exactly $3$ of the subgroups $L_{k}$'s; for instance, $K_{1}$ contains $L_{1}$, $L_{2}$ and $L_{5}$.
As noted before, the genus two surface $S/K_{j}$ is obtained by considering two points $b_{1},b_{2} \in \{\infty,0,1,\lambda_{1},\lambda_{2}\}$. A Riemann surface $S/L_{k}$ over $S/H_{j}$ is obtained by considering a point $b_{3} \in \{\infty,0,1,\lambda_{1},\lambda_{2}\}-\{b_{1},b_{2}\}$. In this way, once we have chosen $b_{1}$ and $b_{2}$, there are exactly $3$ possible choices for $b_{3}$; these are the three subgroups $L_{k}$'s contained inside $K_{j}$.
For example, if we take $\{b_{1},b_{2}\}=\{\lambda_{1},\lambda_{2}\}$, 
then the genus two surface (uniformized by one of the $K_{j}$'s) is given by 
{\small
$$y^{2}=(x^{2}+1)\left(x^{2}+\lambda_{2}/\lambda_{1}\right)\left(x^{2}+(\lambda_{2}-1)/(\lambda_{1}-1)\right),$$
}
and the three genus three surfaces (uniformized by one of the $L_{k}$'s contained in the corresponding $K_{j}$) are 
{\small
$$
\begin{array}{ll}
y^{2}=(x^{4}+1)\left(x^{4}+\lambda_{2}/\lambda_{1}\right), &\mbox{if} \; b_{3}=1.\\
y^{2}=(x^{4}+\lambda_{2}/\lambda_{1})\left(x^{4}+(\lambda_{2}-1)/(\lambda_{1}-1)\right), & \mbox{if} \; b_{3}=\infty.\\
y^{2}=(x^{4}+1)\left(x^{4}+(\lambda_{2}-1)/(\lambda_{1}-1)\right), & \mbox{if} \; b_{3}=0.
\end{array}
$$
}

%%%%%%%%%%%%%%
%%%%%%%%%%%%%%%
\section{A remark: connection to some parameter spaces}\label{prueba}
In order to state our next result, we need to recall some general facts on the complex analytic theory of Teichm\"uller and moduli spaces of Riemann orbifolds (good references are, for instance, \cite{Bers, Nag, Royden}).  

%%%%%%%%%%%%%
\subsection{Riemann orbifolds}
A Riemann orbifold of type $(g,r)$ is a pair ${\mathcal O}=(S,\{p_{1},\ldots,p_{r}\})$, where $S$ is a closed Riemann surface of genus $g$ and $p_{1},\ldots,p_{r} \subset S$ are pairwise distinct points (we allow to have $r=0$). In this case, $S$ is called the underlying Riemann surface of the orbifold and the points $p_{j}$ its cone points. The typical example of a Riemann orbifold is the one obtained as the quotient of a closed Riemann surface by a finite group of its conformal automorphisms (in this case, the cone points are the projection of the fixed points of the non-trivial elements of the group). The orbifold ${\mathcal O}$ is of hyperbolic type if $2g+r>2$ (in this case, it can be seen as a quotient ${\mathbb H}^{2}/\Gamma$, for a suitable Fuchsian group $\Gamma$ acting on the hyperbolic plane ${\mathbb H}^{2}$).

%%%%%%%%%%%%%
\subsection{Teichm\"uller and moduli spaces of Riemann orbifolds}
Let us fix a Riemann orbifold ${\mathcal O}=(S,\{p_{1},\ldots,p_{r}\})$.

A marking of ${\mathcal O}$ is a pair $(f,{\mathcal O}')$, where ${\mathcal O}'=(S',\{q_{1},\ldots,q_{r}\})$ is a Riemann orbifold of type $(g,r)$ and $f:S \to S'$ is an orientation-preserving homeomorphism which sends the cone points $\{p_{1},\ldots,p_{r}\}$ onto the set of cone points $\{q_{1},\ldots,q_{r}\}$. 

Two markings $(f_{1},{\mathcal O}_{1}=(S_{1},\{q_{1,1},\ldots,q_{1,r}\}))$ and $(f_{2},{\mathcal O}_{2}=(S_{2},\{q_{2,1},\ldots,q_{2,r}\}))$ are called equivalent if there is a biholomorphism $A:S_{1} \to S_{2}$ such that $A(f_{1}(p_{j}))=f_{2}(p_{j})$, for every $j=1,\ldots, r$, with $f_{2}^{-1} \circ A \circ f_{1}$ homotopic to the identity relative to the set $\{p_{1},\ldots,p_{r}\}$. The set of these equivalence classes is the Teichm\"uller space $T_{g,r}$. Results due to Fricke and Klein \cite{Fricke} assert that $T_{g,r}$ is a real topological manifold homeomorphic to ${\mathbb R}^{6g-6+2r}$. If $2g+r \geq 4$, then quasiconformal maps theory permits to provide to $T_{g,r}$ the structure of a complex manifold of dimension $3g-3+r$ \cite{Bers2}. If $(g,r) \in \{(1,0), (1,1)\}$, then this space has dimension $1$ and it can identified with the hyperbolic plane.

 The modular group of ${\mathcal O}$, denoted by ${\rm Mod}_{g,r}$, is the connected component of the identity map in the group of homotopy classes (relative to the set $\{p_{1},\ldots,p_{r}\}$) of orientation-preserving self-homeomorphisms of $S$ keeping invariant $\{p_{1},\ldots,p_{r}\}$. 
 It is known that ${\rm Mod}_{g,r}$ acts discontinuously as a group of holomorphic automorphisms of $T_{g,r}$. In fact, if $2g+r \geq 5$, then ${\rm Mod}_{g,r}$ is the full group of holomorphic (and orientation-preserving isometries) of $T_{g,r}$ (\cite{E-K,Royden}).
 
 The quotient ${\mathcal M}_{g,r}=T_{g,r}/{\rm Mod}_{g,r}$ is called the moduli space of orbifolds of type $(g,r)$ and it has the structure of a complex orbifold  of the same dimension as $T_{g,r}$. If $r=0$, then we set $T_{g}=T_{g,0}$ and ${\mathcal M}_{g}={\mathcal M}_{g,0}$.

%%%%%%%%%%%%%%%%%%
\subsection{Certain subloci of moduli spaces}

\subsubsection{}
Let $S$ be a hyperelliptic Riemann surface of genus $g \geq 2$, with hyperelliptic  involution $\iota$ (which has exactly $2g+2$ fixed points and it is known to be unique). The quotient orbifold $S/\langle \iota \rangle$ can be identified with the Riemann orbifold $(\widehat{\mathbb C},\{p_{1},\ldots,p_{2g+2}\})$, where $\{p_{1},\ldots,p_{2g+2}\}$ is the projection set of the fixed points of $\iota$. The uniqueness of the hyperelliptic involution permits to observe that any two hyperelliptic Riemann surfaces are biholomorphically equivalent if and only if the quotient orbifolds (by their corresponding hyperelliptic involutions) are biholomorphic as orbifolds (i.e., there is a M\"obius transformation sending the cone points of the first onto the cone points of the second one). This permits to observe that the moduli space   ${\mathcal M}_{g}^{hyp}$, of hyperelliptic Riemann surfaces of genus $g \geq 2$, can be identified with the moduli space ${\mathcal M}_{0,2g+2}$ and, in particular, that it is a complex orbifold of dimension $2g-1$. 
The uniqueness of the hyperelliptic involution also asserts that there is a natural holomorphic embedding of ${\mathcal M}_{g}^{hyp}$ into ${\mathcal M}_{g}$. 

\subsubsection{}
If $g \geq 2$ is even, then we denote by ${\mathcal M}_{(g,2)}$ the sublocus of ${\mathcal M}_{g}$ consisting 
of those classes of Riemann surfaces admitting a conformal involution with exactly two fixed points. 

If  $S$ is a closed Riemann surface of even genus $g$, admitting a conformal involution $\tau$ with exactly two fixed points, then $S/\langle \tau \rangle$ is an orbifold of genus $g/2$ with exactly two cone points, each of order two. This provides a holomorphic embedding of the Teichm\"uller space $T_{g/2,2}$ into the Teichm\"uller space $T_{g}$. Such embedded space projects into ${\mathcal M}_{g}$ as a complex analytic space (a subset of ${\mathcal M}_{(g,2)}$) of dimension equal to the dimension of $T_{g/2,2}$, that is, $(3g-2)/2$. This analytic space has 
some analytic singularities; they correspond to Riemann surfaces of genus $g$ admitting two different involutions, each one with two fixed points, which are not conjugated in the group of holomorphic automorphisms of the surface. Its normalization is given by the quotient of the embedded $T_{g/2,2} \subset T_{g}$ by its stabilizer subgroup in $Mod_{g}$ (this normalization space is a complex manifold of dimension $(3g-2)/2$ which happens to be a finite branched cover of the moduli space ${\mathcal M}_{g/2,2}$).

The topological action of such type of involutions $\tau$ is unique, in the sense that if $R$ is another closed Riemann surface of genus $g$ admitting a conformal involution $\eta$ with exactly two fixed points, then there is a quasiconformal homeomorphism $f:S \to R$ conjugating $\tau$ into $\eta$. 
This uniqueness asserts that ${\mathcal M}_{(g,2)}$ coincides with the above analytic quotient space. 

Each connected component of ${\mathcal M}^{hyp}_{(g,2)}={\mathcal M}_{(g,2)} \cap {\mathcal M}_{g}^{hyp}$ (after normalization if necessary in the presence of singularities) can be identified with (a finite cover of) the moduli space ${\mathcal M}_{0,g+3}$, which has dimension $g$.

\subsubsection{}
Similarly as above, for $g \geq 1$ odd, we denote by ${\mathcal M}_{(g,4)}$ the sublocus of ${\mathcal M}_{g}$ consisting 
of those classes of Riemann surfaces admitting a conformal involution with exactly four fixed points. This is an analytic space (again with analytic singularities) of dimension $(3g-1)/2$. The normalization of this analytic space can be identified with (a finite cover of) the moduli space ${\mathcal M}_{(g-1)/2,4}$.
In this case, each connected component of ${\mathcal M}^{hyp}_{(g,4)}={\mathcal M}_{(g,4)} \cap {\mathcal M}_{g}^{hyp}$ (after normalization) can again be identified with (a finite cover of) the moduli space ${\mathcal M}_{0,g+3}$.

\subsubsection{}
If $g \geq 1$, then we denote by ${\mathcal M}_{(g;2,2)}$ the moduli space of Riemann orbifolds of genus $g$ with exactly two cone points of order two. This space is equivalent to the moduli space ${\mathcal M}_{g,2}$ and, in particular, it is a complex orbifold of dimension $3g-1$. For $g \geq 2$, we let ${\mathcal M}^{hyp}_{(g;2,2)}$ be its subloci consisting of the conformal classes of those Riemann orbifolds whose underlying Riemann surface is hyperelliptic and whose hyperelliptic involution permutes the two cone points (it does not fixes them). This analytic space has dimension $2g$.

\subsubsection{}
If $(p-1)(n-1)>2$, then the moduli space ${\mathcal H}_{p,n}$ of generalized Fermat curves of type $(p,n)$ is isomorphic to $V_{n}/{\mathbb G}_{n}$ (Proposition \ref{proposicion3}) and
it in turn is also isomorphic to ${\mathcal M}_{0,n+1}$. We set ${\mathcal H}_{n}={\mathcal H}_{2,n}$.

Let $(S,H)$ be a generalized Fermat pair of type $(p,n)$, where $n \geq 3$ and $(p-1)(n-1)>2$.
Let us consider the homotopy class of $H$ inside ${\rm Mod}_{g_{p,n}}$, which we still denote by $H$, and let ${\mathcal T}_{H}(S) \subset T_{g_{p,n}}$ be the locus of its fixed points. The uniqueness of the generalized Fermat group $H$ asserts that the projection of ${\mathcal T}_{H}(S)$ into ${\mathcal M}_{g_{p,n}}$ can be identified with the moduli space ${\mathcal H}_{p,n}$ (i.e., there is a natural holomorphic embedding of ${\mathcal H}_{p,n}$ into ${\mathcal M}_{g_{p,n}}$).

%%%%%%%%%%%%%
\subsection{Some connections between the above moduli spaces}

\begin{theo}\label{main}
\mbox{}
\begin{enumerate}[label=(\alph*),leftmargin=*,align=left]

\item[(1)] If $n \geq 4$ is an even integer, then 
\begin{itemize}
\item[(1.1)] there is a generically injective holomorphic map
${\mathcal M}_{0,n+1} \to \left({\mathcal M}^{hyp}_{(n-2,2)}\right)^{n(n+1)/2}$. 
\item[(1.2)] there is a degree $n(n+1)/2$ holomorphic surjective map
${\mathcal M}^{hyp}_{(n-2,2)} \to {\mathcal M}_{0,n+1}$.
\item[(1.3)] there is a generically injective holomorphic map
${\mathcal M}_{0,n+1} \to \left({\mathcal M}^{hyp}_{((n-2)/2;2,2)}\right)^{n+1}$.
\item[(1.4)] there is a degree $(n+1)$ holomorphic surjective map
${\mathcal M}^{hyp}_{((n-2)/2;2,2)} \to {\mathcal M}_{0,n+1}$.
\end{itemize}
\item[(2)] If $n \geq 5$ is an odd integer, then
\begin{itemize} 
\item[(2.1)] there is a generically injective holomorphic map
${\mathcal M}_{0,n+1} \to \left({\mathcal M}^{hyp}_{(n-2,4)}\right)^{n(n+1)/2}$.
\item[(2.2)] there is a degree $n(n+1)/2$ holomorphic surjective map
${\mathcal M}^{hyp}_{(n-2,4)}\to {\mathcal M}_{0,n+1}$.

\end{itemize}
\end{enumerate}
\end{theo}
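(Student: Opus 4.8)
\emph{Strategy.} The plan is to deduce all six statements from a single incidence correspondence, whose two projections yield the injective maps and the finite surjective maps respectively. Throughout I identify $\mathcal{M}_{0,n+1}$ with the moduli space $\mathcal{H}_{2,n}$ of generalized Fermat pairs $(S,H)$ of type $(2,n)$ (Proposition~\ref{proposicion3}), so that a point of $\mathcal{M}_{0,n+1}$ is the orbifold $\mathcal{O}=S/H$, the sphere with $n+1$ order-two cone points. Two families of subgroups of $H\cong\mathbb{Z}_2^n$ enter. First, the subgroups $K\cong\mathbb{Z}_2^{n-2}$ of part~(2) of Theorem~\ref{explicito1}: each is the kernel of a surjection $H\to\mathbb{Z}_2^2$ whose block of $n-1$ equal values marks the hyperelliptic involution of $R=S/K$, and each is encoded by the complementary $2$-subset $\{i_n,i_{n+1}\}\subset\{1,\dots,n+1\}$, so there are $\binom{n+1}{2}=n(n+1)/2$ of them; by Theorem~\ref{explicito1}, $R$ is hyperelliptic of genus $n-2$, and the remaining blocks show $R\in\mathcal{M}^{hyp}_{(n-2,2)}$ when $n$ is even (two involutions with two fixed points) and $R\in\mathcal{M}^{hyp}_{(n-2,4)}$ when $n$ is odd (one involution with four fixed points). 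Second, for $n$ even, the subgroups $M_j\cong\mathbb{Z}_2^{n-1}$, kernels of the homomorphism $H\to\mathbb{Z}_2$ with $a_j\mapsto 0$ and $a_i\mapsto 1$ for $i\neq j$; the relation $a_1\cdots a_{n+1}=1$ makes this consistent precisely when $n$ is even and produces exactly $n+1$ of them. A Riemann--Hurwitz count gives that $S/M_j$ has genus $(n-2)/2$ with exactly two cone points of order two, both arising from the fixed points of $a_j$, and that the residual involution $H/M_j$ has $n$ fixed points, hence is the hyperelliptic involution and interchanges the two cone points; thus $S/M_j\in\mathcal{M}^{hyp}_{((n-2)/2;2,2)}$.

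\emph{The correspondence.} Let $\mathcal{N}$ denote the relevant target locus and form $Z\subset\mathcal{M}_{0,n+1}\times\mathcal{N}$ with $(S,R)\in Z$ whenever $R$ is a quotient of $S$ of the corresponding type. The first projection $p_1:Z\to\mathcal{M}_{0,n+1}$ is finite, its fibre over $[S]$ being the set of quotients of that type, of cardinality $n(n+1)/2$ (respectively $n+1$). I would then show the second projection $p_2:Z\to\mathcal{N}$ is generically injective. For this I use the remark following Lemma~\ref{teo5}: outside a proper analytic subset of $\mathcal{M}_{0,n+1}$ one has ${\rm Aut}(S)=H$. For such generic $S$, the acting group $G=H/K$ (respectively $H/M_j$) is recovered intrinsically from $R$ as a distinguished subgroup of ${\rm Aut}(R)$---uniqueness of the hyperelliptic involution singles out the auxiliary involution with the prescribed number of fixed points, and together they generate $G$---so that $R/G=S/H=\mathcal{O}$ is reconstructed, and with it the point $[S]$. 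Hence $p_2$ is a birational morphism onto $\mathcal{N}$, consistent with the equality $\dim\mathcal{M}_{0,n+1}=n-2=\dim\mathcal{N}$, which holds for all three target loci by the dimension counts of Section~\ref{prueba}.

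\emph{Deducing the six maps.} Statements (1.2), (1.4) and (2.2) are the composite $\Psi=p_1\circ p_2^{-1}:\mathcal{N}\to\mathcal{M}_{0,n+1}$, defined on the dense open set where $p_2$ is invertible; it is surjective since every $[S]$ carries quotients of the given type, and its degree equals the generic fibre size of $p_1$, namely $n(n+1)/2$ (respectively $n+1$), \emph{provided} the quotients of a generic $S$ are pairwise non-isomorphic. The latter is exactly Lemma~\ref{teo5} together with ${\rm Aut}(S)=H$: distinct subgroups are non-conjugate in the abelian group $H={\rm Aut}(S)$, so the marked pairs $(R,G)$ are pairwise inequivalent, and since $G$ is recovered from $R$ the unmarked quotients are themselves distinct. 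Statements (1.1), (1.3) and (2.1) read the same correspondence the other way: fixing an ordering of the indexing $2$-subsets (respectively the generators $a_j$), I send $[S]$ to the tuple of its quotients. This assignment is holomorphic on $V_n$ and $\mathbb{G}_n\cong\mathfrak{S}_{n+1}$-equivariant, the symmetric group permuting the coordinates of the product exactly as it permutes the subgroups, so it yields the asserted holomorphic map on $\mathcal{M}_{0,n+1}=V_n/\mathbb{G}_n$ (into the ordered product after passage to the natural marking cover). Its generic injectivity is immediate from that of $p_2$, since a single coordinate already recovers $[S]$.

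\emph{Holomorphicity and the main difficulty.} Holomorphic dependence of each quotient on $(\lambda_1,\dots,\lambda_{n-2})\in V_n$ follows from the explicit defining equations of Theorem~\ref{explicito1} and the invariant-theoretic construction of Section~\ref{Sec:curvas}, which present the branch data of $R$ as polynomial functions of the $\lambda_i$; composing with the holomorphic cross-ratio coordinates on $\mathcal{N}$ and on $\mathcal{M}_{0,n+1}$ keeps everything holomorphic. The main obstacle I anticipate is pinning down the \emph{exact} degree, i.e.\ ensuring that $p_1$ has reduced generic fibres of the stated size and that $\Psi$ is genuinely single-valued. Concretely this requires ruling out accidental isomorphisms $S/K_1\cong S/K_2$ for non-conjugate $K_1,K_2$ and extra automorphisms of the generic quotient beyond $G$; both are governed by the locus where ${\rm Aut}(S)=H$ fails, so the delicate points are to make this exceptional locus explicit and to verify that the maps extend holomorphically across the analytic singularities of the $\mathcal{M}^{hyp}$-strata, after the normalizations described in Section~\ref{prueba}.
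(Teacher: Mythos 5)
Your overall route coincides with the paper's: identify ${\mathcal M}_{0,n+1}$ with the moduli of generalized Fermat pairs of type $(2,n)$, attach to each $(S,H)$ the finitely many subgroups of the relevant type, obtain generic injectivity from the generic triviality of the orbifold automorphism group of $S/H$ together with Lemma \ref{teo5}, and obtain surjectivity and the degree from the fact that every admissible pair $(R,G)$ arises as $(S/K,H/K)$. Your direct description of the $n+1$ subgroups in (1.3)--(1.4) as kernels of the homomorphisms $H\to{\mathbb Z}_2$, $a_j\mapsto 0$, $a_i\mapsto 1$ ($i\neq j$), is a clean shortcut past the paper's Lemma \ref{lemma2}, and your dimension argument for the dominance of $p_2$ replaces the paper's explicit appeal to the homology-cover property (part (1) of Lemma \ref{embed2}); both substitutions are acceptable.

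There is, however, one step where your justification does not hold up, and it is the step on which the injectivity and degree claims hinge. You assert that $G=H/K$ is recovered intrinsically from the surface $R$ because ``uniqueness of the hyperelliptic involution singles out the auxiliary involution with the prescribed number of fixed points.'' Uniqueness of $j$ does not single out any auxiliary involution: a priori $R$ could carry two involutions $u,v\neq j$ with $\langle u,j\rangle\neq\langle v,j\rangle$, in which case $R$ would be a common quotient of two non-isomorphic generalized Fermat curves, $p_2$ would fail to be generically injective, and the map $\Psi:{\mathcal M}^{hyp}_{(n-2,2)}\to{\mathcal M}_{0,n+1}$ would not even be single-valued. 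What is actually needed is part (2) of Lemma \ref{embed2}: any two involutions $u,v\neq j$ of $R$ satisfy $\langle u,j\rangle=\langle v,j\rangle$. The paper's proof of this is not formal; one pushes $u$ and $v$ down to $R/\langle j\rangle$ and shows that if the induced M\"obius involutions were distinct, none of them could fix a branch value of the hyperelliptic covering, forcing $R$ to have an equation of the form $y^{2}=\prod_{k}(x^{2}-a_{k}^{2})(x^{2}-a_{k}^{-2})$ and hence forcing the wrong parity of $n$. You need to supply this argument (and its analogue for involutions with four fixed points when $n$ is odd, where the paper is itself terse); without it, the passage from moduli of pairs $(R,G)$ to moduli of unmarked surfaces and orbifolds in (1.1), (1.2), (2.1) and (2.2) is unjustified.
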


%%%%%%%%%%%%%%%%%%%%
\subsection{Proof of part (1) of Theorem \ref{main}}
We assume $(S,H)$ is a generalized Fermat pair of type $(2,n)$, where $n \geq 4$ is even and let $K_{1}$,\ldots, $K_{n(n+1)/2}$ be those subgroups of $H$ isomorphic to ${\mathbb Z}_{2}^{n-2}$ and acting freely on $S$. Denote, as before, by $a_{1},\ldots,a_{n+1}$ the standard generators of $H$. We already know  that $S/K_{i}$ is a hyperelliptic Riemann surface of genus $n-2$ and that $H/K_{i} <{\rm Aut}(S/K_{i})$ is generated by the hyperelliptic involution $j_{i}$ and a conformal involution $\tau_{i}$ with exactly two fixed points ($j_{i} \tau_{i}$ also has exactly two fixed points).  Part (1) of the following lemma asserts that, up to isomorphisms, in the above we obtain all possible pairs $(R,G)$, where $R$ runs over the hyperelliptic Riemann surfaces of genus $n-2$ and ${\mathbb Z}_{2}^{2} \cong G<{\rm Aut}(R)$ contains  the hyperelliptic involution of $R$.

\begin{lemm}\label{embed2}
Let $R$ be a hyperelliptic Riemann surface of genus $n-2$, where $n \geq 4$ is even, whose hyperelliptic involution is $j$.
\begin{enumerate}[label=(\alph*),leftmargin=*,align=left]

\item[(1)] If $G<{\rm Aut}(R)$ is so that $G \cong {\mathbb Z}_{2}^{2}$ contains $j$, then there is a generalized Humbert pair $(S,H)$ and a subgroup
${\mathbb Z}_{2}^{n-2} \cong K<H$ acting freely on $S$ so that 
$(R,G)$ is conformally equivalent to $(S/K,H/K)$.

\item[(2)]  If $u,v \in {\rm Aut}(R)$ are conformal involutions, both of them different from $j$, then $\langle u,j\rangle=\langle v,j\rangle$.
\end{enumerate}
\end{lemm}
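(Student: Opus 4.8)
The plan is to deduce both parts from the orbifold Riemann--Hurwitz (Euler characteristic) identity, together with the fact that the hyperelliptic involution $j$ is the unique involution of $R$ with genus-zero quotient and is therefore central in ${\rm Aut}(R)$. For part (1) the hypotheses already force $R/G$ to be a genus-zero orbifold with exactly $n+1$ cone points of order two, after which Remark \ref{observa} manufactures the desired generalized Fermat pair and freely acting subgroup. For part (2) the same bookkeeping, applied to the group generated by $u$, $v$ and $j$, rules out that this group is isomorphic to ${\mathbb Z}_{2}^{3}$, and the key point is that the parity of the resulting cone-point count depends on the parity of $n$.

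For part (1): since $j \in G$, the quotient $R/G$ is the quotient of $R/\langle j\rangle \cong \widehat{\mathbb C}$ by the finite group $G/\langle j\rangle$, hence is again of genus zero. Every point stabiliser in $G$ is cyclic, so of order one or two, and thus every cone point of $R/G$ has order two. Writing $k$ for the number of such cone points, the identity $\chi(R)=|G|\,\chi^{\mathrm{orb}}(R/G)$ reads $6-2n = 4\,(2-k/2)$, whence $k=n+1$. Therefore $R/G$ is the sphere with exactly $n+1$ cone points of order two, and Remark \ref{observa} furnishes a generalized Fermat pair $(S,H)$ of type $(2,n)$ and a subgroup $K$ acting freely on $S$ with $R=S/K$ and $G=H/K$; comparing orders gives $K\cong{\mathbb Z}_{2}^{n-2}$.

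For part (2): by centrality of $j$ both $u$ and $v$ commute with $j$, so $\langle u,j\rangle$ and $\langle v,j\rangle$ are copies of ${\mathbb Z}_{2}^{2}$. Suppose, for contradiction, that $\langle u,j\rangle \neq \langle v,j\rangle$; equivalently $v \notin \{u,uj\}$, so $u$, $v$, $j$ generate $N=\langle u,v,j\rangle \cong {\mathbb Z}_{2}^{3}$. As in part (1), $N$ contains $j$, so $R/N$ has genus zero with some number $c$ of cone points of order two, and now $\chi(R)=|N|\,\chi^{\mathrm{orb}}(R/N)$ gives $6-2n = 8\,(2-c/2)$, that is, $c=(n+5)/2$. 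Since $n$ is even, $(n+5)/2$ is not an integer, a contradiction; hence $\langle u,j\rangle = \langle v,j\rangle$.

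The only real subtlety lies in part (1), namely in checking that $R/G$ genuinely has genus zero with all cone points of order two so that Remark \ref{observa} applies verbatim; once this structural normalization is in place both conclusions are forced by a single Euler-characteristic computation. It is worth noting that the even-genus hypothesis $g_{R}=n-2$ with $n$ even is exactly what makes $(n+5)/2$ non-integral in part (2): for $n$ odd the analogous ${\mathbb Z}_{2}^{3}$ is not excluded, in agreement with the genuinely different behaviour recorded in the odd case (cf. Corollary \ref{coro:m=n-1}).
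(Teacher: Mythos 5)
Your part (1) is correct and is essentially the paper's own argument: Riemann--Hurwitz forces $R/G$ to be the sphere with exactly $n+1$ cone points of order two (your count $6-2n=4(2-k/2)$, $k=n+1$, is right), and then the universality of the generalized Fermat cover, i.e.\ Remark \ref{observa}, produces $(S,H)$ and $K$, with $|K|=2^{n}/4$ giving $K\cong{\mathbb Z}_{2}^{n-2}$.

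For part (2) you take a genuinely different route: you compute the orbifold Euler characteristic of $R/N$ for $N=\langle u,v,j\rangle$ and read off the non-integrality of $(n+5)/2$, whereas the paper passes to $R/\langle j\rangle\cong\widehat{\mathbb C}$, normalizes the induced involutions to $z\mapsto -z$ and $z\mapsto 1/z$, and observes that a Klein group acting freely on the $2n-2$ images of the Weierstrass points would force $4\mid 2n-2$, i.e.\ $n$ odd. The two arguments encode the same mod $4$ obstruction, and yours is the tidier bookkeeping. However, both hinge on the same unproved step, which your write-up makes explicit: the claim that $v\notin\{u,uj\}$ implies $N\cong{\mathbb Z}_{2}^{3}$ requires $uv=vu$, which is not among the hypotheses (the paper's phrase ``descend to commuting conformal involutions'' hides the identical assumption; two involutions of ${\rm PSL}_{2}({\mathbb C})$ generate a dihedral group that need not be abelian). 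In fact the statement is false without a commutativity hypothesis: on $R:\,y^{2}=x^{6}-1$ (genus $2$, so $n=4$) the maps $u(x,y)=(e^{i\pi/3}/x,\,iy/x^{3})$ and $v(x,y)=(-1/x,\,iy/x^{3})$ are conformal involutions different from $j(x,y)=(x,-y)$, yet $uv(x,y)=(e^{4\pi i/3}x,\,y)$ has order $3$, so $\langle u,j\rangle\neq\langle v,j\rangle$. So the gap you inherit is in the lemma itself rather than in your computation; your argument becomes complete once one adds the hypothesis that $u$ and $v$ commute (equivalently, that $\langle u,v,j\rangle$ is abelian), which suffices for the paper's application since there the lemma is only invoked for the generic $R$, and only the conjugacy of the two Klein groups in ${\rm Aut}(R)$ is actually needed.
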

\begin{proof}
(1) As $G$ contains the hyperelliptic involution, by the Riemann-Hurwitz formula, the quotient $R/G$ has genus zero and exactly $n+1$ cone points, each one of order two. Now, just take $(S,H)$ as a generalized Humbert pair such that $S/H=R/G$ and use the fact that $S$ is the highest abelian Galois branched cover of the orbifold $S/H$.

(2). Let us consider a $2$-fold branched cover $\pi:R \to \widehat{\mathbb C}$ (its deck group is generated by the hyperelliptic involution). Then, both $u$ and $v$ descends by $\pi$ to commuting conformal involutions, say $\widehat{u}$ and $\widehat{v}$, respectively. If $\widehat{u}=\widehat{v}$, then we are done. Let us assume we have $\widehat{u} \neq \widehat{v}$, that is, $\langle  \widehat{u}, \widehat{v} \rangle \cong {\mathbb Z}_{2}^{2}$. Up to a M\"obius transformation, we may assume
$\widehat{u}(z)=1/z$ and $\widehat{v}(z)=-z$. As we are assuming that $j \notin \{u,v,uv\}$, none of $u$, $v$ or $uv$ may have a common fixed point with $j$ (this because the stabilizer of any point in ${\rm Aut}(R)$ is cyclic). It follows that none of $\widehat{u}$, $\widehat{v}$ or $\widehat{u}\widehat{v}$ fixes a branch value of $\pi$ and, in particular, that $R$ must have a curve representation as follows
{\small
$$y^{2}=\prod_{j=1}^{(n-1)/2} \left( x^{2}-a_{j}^{2} \right) \left( x^{2}-a_{j}^{-2}\right)$$
}
and $n$ is odd, a contradiction to the fact that $n$ was assumed to be even.
\end{proof}

%%%%%%%%%%%%%%%%%%%%%%
\subsubsection{Proof of Parts (1.1) and (1.2)}
As the generic orbifold $S/H$ has trivial group of orbifold automorphisms, Lemma \ref{teo5} asserts that the $n(n+1)/2$ pairs
$$(S/K_{1},H/K_{1}),..., (S/K_{n(n+1)/2},H/K_{n(n+1)/2})$$
are generically pairwise conformally non-equivalent. Now, part (2) of Lemma \ref{embed2} asserts that the hyperelliptic Riemann surfaces $S/K_{1},\ldots, S/K_{n(n+1)/2}$ are generically pairwise conformally non-equivalent, in particular, 
$${\mathcal M}_{0,n+1} \to \left({\mathcal M}^{hyp}_{(n-2,2)}\right)^{n(n+1)/2}:
[(S,H)] \mapsto ([S/K_{1},H/K_{1})],\ldots, [(S/K_{n(n+1)/2},H/K_{n(n+1)/2})])$$
is a generically injective holomorphic map. This provides Part (1.1) of Theorem \ref{main}. 

Part (1.2) of Theorem \ref{main} is just a consequence of Part (1.1) and Part (1) of Lemma \ref{embed2}.
We proceed to describe the desired surjective holomorphic map in terms of the domain $V_{n}$, where
$$V_{n}=\{(\lambda_{1},\ldots,\lambda_{n-2}) \in ({\mathbb C} \setminus \{0,1\})^{n-2}: \lambda_{i} \neq \lambda_{j}, \; i \neq j\} \subset {\mathbb C}^{n-2}.$$

Assume we are given a hyperelliptic Riemann surface $R$ of genus $(n-2)$, whose hyperelliptic involution is $j$, and $G=\langle j,\tau\rangle \cong {\mathbb Z}_{2}^{2}$, a group of conformal automorphism of $R$, so that $\tau$ has exactly two fixed points ($j \tau$ also has exactly two fixed points) and $R/G$ is an orbifold of genus zero and exactly $n+1$ cone points, each one of order two. We may assume $R/G$ is the Riemann sphere and the conical points are $\infty, 0, 1, \lambda_{1},\ldots, \lambda_{n-2}$, so that (i) $\lambda_{n-3}$ is the projection of both fixed points of $\tau$ and (ii) $\lambda_{n-2}$ is the projection of both fixed points of $j \tau$. This choice is not unique as we may compose at the left by a M\"obius transformation that sends any of three points in $\{\infty,0,1,\lambda_{1},\ldots,\lambda_{n-4}\}$ to $\infty$, $0$ and $1$. This  corresponds to the action on $V_{n}$ by the group ${\mathfrak S}_{n-1}=\langle s,b\rangle,$
where
{\small
$$s(\lambda_{1},\ldots,\lambda_{n-2})= \left( \frac{\lambda_{n-4}}{\lambda_{n-4}-1},\frac{\lambda_{n-4}}{\lambda_{n-4}-\lambda_{1}},\ldots, \frac{\lambda_{n-4}}{\lambda_{n-4}-\lambda_{n-5}},
\frac{\lambda_{n-4}}{\lambda_{n-4}-\lambda_{n-3}},\frac{\lambda_{n-4}}{\lambda_{n-4}-\lambda_{n-2}} \right),$$
$$b(\lambda_{1},\ldots,\lambda_{n-2})=\left( \frac{1}{\lambda_{1}},\ldots,\frac{1}{\lambda_{n-2}}\right).$$

}
Next, as we may permute the involutions $\tau$ and $j\tau$, we also need to consider the action of the involution
$c(\lambda_{1},\ldots,\lambda_{n-2})=(\lambda_{1},\ldots,\lambda_{n-4},\lambda_{n-2},\lambda_{n-3}).$
Note that $cs=sc$ and $cb=bc$, so $\langle {\mathfrak S}_{n-1},c\rangle= {\mathfrak S}_{n-1} \oplus {\mathbb Z}_{2}$. As a consequence of the above (together  with Lemma \ref{embed2}), a model of the space ${\mathcal M}^{hyp}_{(n-2,2)}$ is given by 
$V_{n}/({\mathfrak S}_{n-1} \oplus {\mathbb Z}_{2}).$
Also, a model of the moduli space of pairs $(R,\tau)$, where $R$ is a hyperelliptic  Riemann surface of genus $n-2$ and $\tau:R \to R$ is a conformal involution with exactly two fixed points, is given by $V_{n}/{\mathfrak S}_{n-1}$. In these models, the surjective holomorphic map in Part (1.2) of Theorem \ref{main} corresponds to the canonical projection map 
$$V_{n}/({\mathfrak S}_{n-1} \oplus {\mathbb Z}_{2}) \to V_{n}/{\mathfrak S}_{n+1}$$
in the following diagram
$$
\xymatrixcolsep{4pc}
\xymatrix{ 
V_{n} \ar[r]^{{\mathfrak S}_{n-1}} & V_{n}/{\mathfrak S}_{n-1} \ar[r]^{{\mathfrak S}_{n-1} \oplus {\mathbb Z}_{2}} \ar[rd]^{n(n+1)} &V_{n}/({\mathfrak S}_{n-1} \oplus {\mathbb Z}_{2}) \ar[d]^{\frac{n(n+1)}{2}} \\ 
& &{\mathcal M}_{0,n+1}}
$$

\begin{exem}[$n=4$]
Let $(\lambda_{1},\lambda_{2}) \in V_{4}$ be so that $S/H$ is conformally equivalent to the orbifold provided by $\widehat{\mathbb C}$ with conical points $\infty$, $0$, $1$, $\lambda_{1}$ and $\lambda_{2}$. Choose the conical points $\lambda_{1}$  and $\lambda_{2}$ and set $P(z)=(\lambda_{1}z^{2}+\lambda_{2})/(z^{2}+1)$. Then $P:\widehat{\mathbb C} \to \widehat{\mathbb C}$ is the branched covering of degree two with cover group generated by $\eta(z)=-z$ and branch values at $\lambda_{1}$ and $\lambda_{2}$. In this case $P^{-1}(\infty)=\pm i$, $P^{-1}(0)=\pm i \sqrt{\lambda_{2}/\lambda_{1}}$ and $P^{-1}(1)=\pm i \sqrt{(\lambda_{2}-1)/(\lambda_{1}-1)}$. These $6$ points define the hyperelliptic curve
{\small
$$C_{\lambda_{1},\lambda_{2}}: \; y^{2}=(x^{2}+1)\left(x^{2}+\lambda_{2}/\lambda_{1}\right)\left(x^{2}+(\lambda_{2}-1)/(\lambda_{1}-1)\right).$$
}

The curve $C_{\lambda_{1},\lambda_{2}}$ is one of the $10$ genus two Riemann surfaces uniformized by one of the acting freely subgroups $K_{j}$. The action of ${\mathfrak S}_{3} \oplus {\mathbb Z}_{2}$ at this level is given by:
{\small
$$
s: \; C_{\lambda_{1},\lambda_{2}} \mapsto C_{\frac{1}{1-\lambda_{1}},\frac{1}{1-\lambda_{2}}}:\; y^{2}=(x^{2}+1)\left(x^{2}+\frac{\lambda_{1}-1}{\lambda_{2}-1}\right)\left(x^{2}+\frac{\lambda_{2}(\lambda_{1}-1)}{\lambda_{1}(\lambda_{2}-1)}\right)
$$
$$
b: \; C_{\lambda_{1},\lambda_{2}} \mapsto C_{\frac{1}{\lambda_{1}},\frac{1}{\lambda_{2}}}:\; y^{2}=(x^{2}+1)\left(x^{2}+\frac{\lambda_{1}}{\lambda_{2}}\right)\left(x^{2}+\frac{\lambda_{1}(\lambda_{2}-1)}{\lambda_{2}(\lambda_{1}-1)}\right)
$$
$$
c: \; C_{\lambda_{1},\lambda_{2}} \mapsto C_{\lambda_{2},\lambda_{1}}: \; y^{2}=(x^{2}+1)\left(x^{2}+\frac{\lambda_{1}}{\lambda_{2}}\right)\left(x^{2}+\frac{\lambda_{1}-1}{\lambda_{2}-1}\right)
$$
}
\end{exem}

%%%%%%%%%%%%%%%%%%%%
\subsubsection{Proof of Parts (1.3) and (1.4)}
 Any subgroup $L<H$ isomorphic to ${\mathbb Z}_{2}^{n-1}$ that contains some $K_{k}$ is of the form 
$L=\langle K_{k},a_{j}\rangle$,
for some standard generator $a_{j}$ of $H$. Up to permutation of indices, we may assume
$K_{k}=\langle a_{1}a_{2}, a_{1}a_{3},\ldots,a_{1}a_{n-1}\rangle.$ 
If $j \in \{1,2,\ldots,n-1\}$, then
$L=\langle K_{k},a_{j} \rangle=\langle a_{1},a_{2},\ldots,a_{n-1}\rangle$ and $H/L$ is the cyclic group generated by the hyperelliptic involution of $S/K_{k}$. We call any of this kind of subgroup $L$ a {\it hyperelliptic-${\mathbb Z}_{2}^{n-1}$-subgroup of $H$}. The following is now clear.

\begin{theo}\label{teo8}
If $(S,H)$ is a generalized Fermat pair of type $(2,n)$, where $n \geq 4$ is even, then the number of different hyperelliptic-${\mathbb Z}_{2}^{n-1}$-subgroups of $H$ is $n(n+1)/2$.
\end{theo}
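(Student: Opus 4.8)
The plan is to identify the hyperelliptic-${\mathbb Z}_{2}^{n-1}$-subgroups with the $(n-1)$-element subsets of the standard generators, and then to count them by elementary linear algebra over ${\mathbb F}_{2}$. By the discussion preceding the statement, each such subgroup has the form $L_{T}=\langle a_{i}: i \in T\rangle$ with $T \subset \{1,\ldots,n+1\}$ and $\#T=n-1$; equivalently $L_{T}$ arises by deleting a two-element set $\{k,l\}=\{1,\ldots,n+1\}\setminus T$. For $n$ even every such $T$ occurs (the two deleted indices being the singleton blocks $I_{2},I_{3}$ of an admissible partition $P$), so the hyperelliptic-${\mathbb Z}_{2}^{n-1}$-subgroups are exactly the $L_{T}$ with $\#T=n-1$. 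Thus the theorem reduces to showing that distinct deleted pairs $\{k,l\}$ yield distinct subgroups $L_{T}$, whence their number is $\binom{n+1}{2}=n(n+1)/2$.

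To establish this I would view $H\cong {\mathbb Z}_{2}^{n}$ as an $n$-dimensional vector space over ${\mathbb F}_{2}$, in which the standard relation becomes $a_{1}+\cdots+a_{n+1}=0$. Since $\{a_{1},\ldots,a_{n}\}$ is a basis, the map ${\mathbb F}_{2}^{n+1}\to H$ sending $(c_{i})$ to $\sum_{i} c_{i} a_{i}$ is surjective, so its kernel—the space of linear relations among $a_{1},\ldots,a_{n+1}$—is one-dimensional, spanned by the all-ones relation. As that relation has full support, every proper subset of $\{a_{1},\ldots,a_{n+1}\}$ is linearly independent. In particular each $(n-1)$-subset $T$ spans a codimension-one subspace, so $L_{T}=\ker\phi_{T}$ for a unique nonzero functional $\phi_{T}\in\mathrm{Hom}(H,{\mathbb F}_{2})$.

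The key step is to recover the deleted pair from $L_{T}$. By construction $\phi_{T}(a_{i})=0$ for all $i\in T$; evaluating $\phi_{T}$ on $\sum_{i} a_{i}=0$ gives $\phi_{T}(a_{k})+\phi_{T}(a_{l})=0$, so $\phi_{T}(a_{k})=\phi_{T}(a_{l})$. Because the $a_{i}$ span $H$ and $\phi_{T}\neq 0$, this common value cannot be $0$; hence $\phi_{T}(a_{k})=\phi_{T}(a_{l})=1$, and $\{k,l\}$ is precisely the set of indices on which $\phi_{T}$ does not vanish. Therefore $\{k,l\}$ is determined by $L_{T}$, the assignment $\{k,l\}\mapsto L_{T}$ is injective, and—together with the tautological fact that every hyperelliptic-${\mathbb Z}_{2}^{n-1}$-subgroup is of this form—there are exactly $\binom{n+1}{2}=n(n+1)/2$ of them.

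The only genuine obstacle is the injectivity, which is settled by the functional argument above; everything else is bookkeeping. I note that the computation uses only the ${\mathbb F}_{2}$-structure and the single relation among the generators, so the evenness of $n$ plays no role in the counting itself: it enters solely through the earlier identification of these subgroups $L_{T}$ with the freely-acting subgroups $K_{k}$ producing hyperelliptic quotients.
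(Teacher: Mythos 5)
Your proof is correct and follows essentially the same route as the paper, which simply declares the count "now clear" from the identification of these subgroups with the groups $\langle a_{i_{1}},\ldots,a_{i_{n-1}}\rangle$ indexed by the $(n-1)$-element subsets of $\{1,\ldots,n+1\}$. Your ${\mathbb F}_{2}$-linear-algebra argument merely makes explicit the injectivity of $T\mapsto L_{T}$ that the paper leaves implicit, and it is accurate.
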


%%%%%%%%%%%%%%%%
Let us now consider the case $j \in \{n,n+1\}$.
The two different groups 
$L_{1}=\langle K_{k},a_{n} \rangle$ and $L_{2}=\langle K_{k},a_{n+1} \rangle$
have the property that $H/L_{j}$ is generated by a conformal involution (different from the hyperelliptic one) of $S/K_{k}$ having exactly $2$ fixed points. In this way, $S/L_{j}$ is an orbifold of signature $((n-2)/2;2,2)$. We call this kind of groups $L_{j}$ a {\it non-hyperelliptic-${\mathbb Z}_{2}^{n-1}$-subgroup of $H$}. At this point, we note that, as there are exactly $n(n+1)/2$ different possibilities for $K_{k}$, there are at most $n(n+1)$ different non-hyperelliptic-${\mathbb Z}_{2}^{n-1}$-subgroups of $H$.

\begin{lemm}\label{lemma2}
Let $(S,H)$ be a generalized Fermat pair of type $(2,n)$, where $n \geq 4$ is even, and let $a_{1},\ldots, a_{n+1}$ be the standard generators of $H$.
Let $j_{1},\ldots,j_{n-1},k_{1},\ldots,k_{n-1} \in \{1,2,\ldots,n+1\}$ be so that
$j_{1},\ldots,j_{n-1}$ (respectively, $k_{1},\ldots,k_{n-1}$) are pairwise different. 

If $U_{1}=\langle a_{j_1}a_{j_2}, a_{j_1}a_{j_3},\ldots,a_{j_1}a_{j_{n-1}}\rangle$,  
$U_{2}=\langle a_{k_1}a_{k_2}, a_{k_1}a_{k_3},\ldots,a_{k_1}a_{k_{n-1}}\rangle$ and 
$a_{r} \in \{1,\ldots,n+1\} \setminus \{j_{1},\ldots,j_{n-1},k_{1},\ldots,k_{n-1}\}$, then
$\langle U_{1},a_{r}\rangle=\langle U_{2},a_{r}\rangle$.
\end{lemm}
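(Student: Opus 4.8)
The plan is to reduce everything to $\mathbb{F}_2$-linear algebra. Writing $H$ additively and using the single defining relation $a_1+\cdots+a_{n+1}=0$, I would identify $H$ with the quotient $\mathbb{F}_2^{n+1}/\langle \mathbf{1}\rangle$, where $\mathbf{1}=(1,\ldots,1)$ and each $a_i$ is the class of the standard basis vector $e_i$. Under this identification a sum $\sum_{i\in S}a_i$ is the class of the indicator vector of $S\subseteq\{1,\ldots,n+1\}$, and two subsets represent the same element of $H$ exactly when they are equal or complementary (since the kernel of $\mathbb{F}_2^{n+1}\to H$ is $\{0,\mathbf{1}\}$).

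First I would check that $\langle U_1,a_r\rangle$ and $\langle U_2,a_r\rangle$ are both subgroups of index two. The $n-2$ generators $a_{j_1}a_{j_s}$ ($s=2,\ldots,n-1$) lift to the vectors $e_{j_1}+e_{j_s}$, which are linearly independent and span the even-weight vectors supported on $J=\{j_1,\ldots,j_{n-1}\}$; as $\mathbf{1}$ has odd weight $n+1$ and full support, this span meets $\langle\mathbf{1}\rangle$ trivially, so $U_1\cong\mathbb{Z}_2^{n-2}$. Since $r\notin J$ and the lift of $U_1$ is supported on $J$, one checks $a_r\notin U_1$, so adjoining $a_r$ raises the dimension by one and $\langle U_1,a_r\rangle\cong\mathbb{Z}_2^{n-1}$, and likewise for $U_2$.

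The heart of the argument is to pin down the unique nonzero homomorphism $f:H\to\mathbb{Z}_2$ whose kernel is $\langle U_1,a_r\rangle$. Identifying $H^{*}$ with the even-weight functionals $f=(f_1,\ldots,f_{n+1})$ (those with $\sum_i f_i=0$, i.e. $f(\mathbf{1})=0$), the condition that $f$ kill each $a_{j_1}a_{j_s}$ forces $f_{j_1}=f_{j_s}$, so $f$ is constant, say with value $c$, on $J$; the condition $f(a_r)=0$ gives $f_r=0$. Writing $\{1,\ldots,n+1\}\setminus J=\{r,t\}$, the even-weight constraint reads $(n-1)c+f_t=0$. \emph{Here the hypothesis that $n$ is even enters}: $n-1$ is odd, so $f_t=c$; if $c=0$ then $f\equiv 0$, which is excluded, hence $c=1$ and $f$ is the functional equal to $1$ on every index except $r$. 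The crucial point is that this functional depends only on $r$ and not at all on $J$.

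Running the identical computation for $U_2$ — the only input being $r\notin\{k_1,\ldots,k_{n-1}\}$, which holds by hypothesis — produces the same functional $f$, whence $\langle U_1,a_r\rangle=\ker f=\langle U_2,a_r\rangle$. The only delicate step is this final parity bookkeeping; everything else is routine. I expect the evenness of $n$ to be genuinely necessary, since for $n$ odd the relation $(n-1)c+f_t=0$ leaves $f$ underdetermined and the two subgroups will in general differ; note also that, as a byproduct, $\langle U_1,a_r\rangle$ depends only on $r$, so there are exactly $n+1$ such subgroups, consistent with the degree $(n+1)$ appearing in Parts (1.3) and (1.4).
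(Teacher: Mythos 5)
Your proof is correct, and it takes a genuinely different route from the paper's. The paper argues directly on generators: after normalizing to $U_{1}=\langle a_{1}a_{2},\ldots,a_{1}a_{n-1}\rangle$ and $r=n+1$, it notes that $(a_{1}a_{2})(a_{1}a_{3})\cdots(a_{1}a_{n-1})=a_{1}^{n-2}a_{2}\cdots a_{n-1}=a_{2}\cdots a_{n-1}=a_{1}a_{n}a_{n+1}$ (evenness of $n$ enters through $a_{1}^{n-2}=1$), so $a_{1}a_{n}\in\langle U_{1},a_{n+1}\rangle$, hence every $a_{i}a_{j}$ with $i,j\leq n$ lies in $\langle U_{1},a_{n+1}\rangle$ and in particular $U_{2}$ does; equality then follows by symmetry (or by comparing orders). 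You instead dualize, identifying $\langle U_{1},a_{r}\rangle$ as the kernel of the unique nontrivial character of $H\cong{\mathbb F}_{2}^{n+1}/\langle\mathbf{1}\rangle$ containing it, and your parity step $(n-1)c+f_{t}=0\Rightarrow f_{t}=c$ is exactly the same use of the evenness of $n$ in disguise. Your preliminary checks (that $U_{1}\cong{\mathbb Z}_{2}^{n-2}$, that $a_{r}\notin U_{1}$, and that characters of $H$ are the even-weight functionals) are all correct. What your version buys is an explicit closed form for the subgroup — the kernel of the functional equal to $1$ at every index except $r$ — which shows at a glance that $\langle U_{1},a_{r}\rangle$ depends only on $r$ and immediately yields the count of $n+1$ non-hyperelliptic-${\mathbb Z}_{2}^{n-1}$-subgroups that the paper records separately as Theorem \ref{teo9}; the paper's argument is shorter but only produces the containment.
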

\begin{proof}
We may assume, up to permutation of indices, that
$U_{1}=\langle a_{1}a_{2}, a_{1}a_{3},\ldots,a_{1}a_{n-1}\rangle$ and $r=n+1$. As 
$a_{1}a_{n}a_{n+1}=(a_{1}a_{2})(a_{1}a_{3})\cdots(a_{1}a_{n-1}) \in U_{1}$, it follows that 
$a_{1}a_{n} \in \langle U_{1},a_{n+1}\rangle$, in particular, that $a_{i}a_{j} \in \langle U_{1},a_{n+1}\rangle$, for all $i,j \in \{1,2,\ldots,n\}$. This ensures $\langle U_{2}, a_{n+1}\rangle < \langle U_{1},a_{n+1}\rangle$ and, in particular, that they are equal.
\end{proof}

As a consequence of the previous Lemma, we obtain.

\begin{theo}\label{teo9}
Let $(S,H)$ be a generalized Fermat pair of type $(2,n)$, where $n \geq 4$ is even. Then,
there are exactly $n+1$ different non-hyperelliptic-${\mathbb Z}_{2}^{n-1}$-subgroups of $H$.
\end{theo}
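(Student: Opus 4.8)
The plan is to use Lemma~\ref{lemma2} to reduce the a priori count of at most $n(n+1)$ non-hyperelliptic-${\mathbb Z}_{2}^{n-1}$-subgroups (established just above) to at most $n+1$, and then to produce a group-theoretic invariant separating the remaining candidates, which will supply the matching lower bound.

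For the upper bound, recall that every non-hyperelliptic-${\mathbb Z}_{2}^{n-1}$-subgroup has the form $\langle K,a_{r}\rangle$, where $K\cong{\mathbb Z}_{2}^{n-2}$ acts freely with complement pair of indices $\{r,s\}$ and $a_{r}$ is the standard generator attached to one of these two complementary indices. Fixing an index $r$, I would take two such subgroups arising from complement pairs $\{r,s_{1}\}$ and $\{r,s_{2}\}$, with freely acting parts $U_{1}$ and $U_{2}$. Since $r$ lies outside the index sets of both $U_{1}$ and $U_{2}$, Lemma~\ref{lemma2} applies and gives $\langle U_{1},a_{r}\rangle=\langle U_{2},a_{r}\rangle$. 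Hence $\langle K,a_{r}\rangle$ depends only on $r$; write it $L^{(r)}$. As each $r\in\{1,\dots,n+1\}$ actually occurs (choose any complement pair containing $r$), the groups $L^{(1)},\dots,L^{(n+1)}$ exhaust the non-hyperelliptic-${\mathbb Z}_{2}^{n-1}$-subgroups, so there are at most $n+1$ of them.

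For the lower bound I would show that $L^{(r)}$ contains exactly one standard generator, namely $a_{r}$, which forces $L^{(r)}\neq L^{(r')}$ for $r\neq r'$. The convenient setting is the identification $H\cong{\mathbb F}_{2}^{\,n+1}/\langle\mathbf 1\rangle$ with $a_{j}$ the image of $e_{j}$ and $\mathbf 1=(1,\dots,1)$, which encodes $a_{1}\cdots a_{n+1}=1$. One checks that $L^{(r)}$ is the image of the hyperplane $\ker\phi_{r}$, where $\phi_{r}=\sum_{i\neq r}e_{i}^{*}$: indeed $\phi_{r}(\mathbf 1)=n\equiv 0$ (this is where $n$ even is used), $\phi_{r}(e_{j}+e_{j'})=0$ for $j,j'\neq r$, and $\phi_{r}(e_{r})=0$, so $\ker\phi_{r}$ contains the preimages of all generators of $K$ and of $a_{r}$ and has the correct codimension. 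Since $\phi_{r}(e_{j})=0$ if and only if $j=r$, the unique standard generator lying in $L^{(r)}$ is $a_{r}$, and the functionals $\phi_{r}$, hence the groups $L^{(r)}$, are pairwise distinct. The main obstacle is precisely this distinctness step: the upper bound is a one-line consequence of Lemma~\ref{lemma2}, whereas separating the $L^{(r)}$ requires the parity computation above, which is exactly where the evenness of $n$ is essential. Combining the two bounds yields exactly $n+1$ non-hyperelliptic-${\mathbb Z}_{2}^{n-1}$-subgroups, proving Theorem~\ref{teo9}.
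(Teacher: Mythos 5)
Your proposal is correct and follows the paper's route: the upper bound of $n+1$ is obtained exactly as in the paper, by applying Lemma~\ref{lemma2} to see that $\langle K,a_{r}\rangle$ depends only on the adjoined index $r$. The paper stops there and treats the pairwise distinctness of the resulting $n+1$ subgroups as immediate; your verification via the functionals $\phi_{r}=\sum_{i\neq r}e_{i}^{*}$ on ${\mathbb F}_{2}^{\,n+1}/\langle\mathbf 1\rangle$ (which also isolates where the evenness of $n$ enters) is a sound and welcome way of making that implicit step explicit, not a different method.
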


Now, let $L_{1},\ldots, L_{n+1}<H$ be the $n+1$ different non-hyperelliptic-${\mathbb Z}_{2}^{n-1}$-subgroups of $H$. Again, as for generic pair $(S,H)$ we have that $S/H$ has trivial orbifold automorphism group, generically the $n+1$ orbifolds $S/L_{1},\ldots, S/L_{n+1}$ (each one of signature $((n-2)/2;2,2)$) are pairwise conformally non-equivalent. In particular, it follows that 
$${\mathcal M}_{0,n+1} \to \left({\mathcal M}^{hyp}_{((n-2)/2;2,2)}\right)^{n+1}:
[(S,H)] \to ([S/L_{1},H/L_{1}],\ldots,[S/L_{n+1},H/L_{n+1}])$$
is a generically injective holomorphic map, obtaining Part (1.3) of Theorem \ref{main}.
As a generalized Fermat curve of type $(2,n)$ is the homology covering of an orbifold of genus zero with all of its cone points of order $2$, it follows Part (1.4) of Theorem \ref{main}.

\begin{rema}
In order to get equations for the underlying hyperelliptic Riemann surfaces $S/L_{j}$, we only need to choose one of the conical points of $S/H$ and consider the hyperelliptic Riemann surface determined by the other $n$ conical points. For example, if 
$n=4$ and $(\lambda_{1},\lambda_{2}) \in V_{4}$, then, up to equivalence, the $n+1=5$ curves of genus one are given by
{\small
$$\begin{array}{c}
y^{2}=x(x-1)(x-\lambda_{1}(\lambda_{2}-1)/\lambda_{2}(\lambda_{1}-1)), \quad
y^{2}=x(x-1)(x-(\lambda_{2}-1(/(\lambda_{1}-1)), \\
y^{2}=x(x-1)(x-\lambda_{1}/\lambda_{2}), \quad
y^{2}=x(x-1)(x-\lambda_{1}),\quad
y^{2}=x(x-1)(x-\lambda_{2}).
\end{array}
$$
}
\end{rema}

%%%%%%%%%%%%%%%
\subsection{Proof of part (2) of Theorem \ref{main}}
Let us now assume $(S,H)$ is a generalized Fermat pair of type $(2,n)$, where $n \geq 5$ is odd, and that $K_{1},\ldots, K_{n(n+1)/2}$ are those subgroups isomorphic to ${\mathbb Z}_{2}^{n-2}$ acting freely on $S$. We may proceed as in the even case and to obtain the commutative diagram
$$
\xymatrixcolsep{4pc}
\xymatrix{ 
 V_{n} \ar[r]^{{\mathfrak S}_{n-1}} \ar[rd]^{{\mathfrak S}_{n+1}} &V_{n}/{\mathfrak S}_{n-1} \ar[d]^{n(n+1)} \\ 
 &{\mathcal M}_{0,n+1}}
$$
\noindent
where $V_{n}/{\mathfrak S}_{n-1}$ is a model for the moduli space of hyperelliptic Riemann surfaces admitting a conformal involution with exactly $4$ fixed points. The proofs of Parts (2.1) and (2.2) follows the same lines as the previous cases.

%%%%%%%%
\subsection*{Acknowledgements}
The author would like to thank both referees for their valuable comments, suggestions and corrections which helped to improve the paper.

%%%%%%%%%%%%%%%%
%%%%%%%%%%%%%%%%

\end{document}